\documentclass[11pt,reqno]{amsart}

\usepackage[utf8]{inputenc}
\usepackage[T1]{fontenc}
\usepackage{amsmath,amssymb,amsthm}
\usepackage{mathtools}
\usepackage{geometry}
\usepackage{hyperref}
\hypersetup{colorlinks=true,linkcolor=blue,citecolor=blue,urlcolor=blue}
\usepackage{enumitem}

\newtheorem{theorem}{Theorem}[section]
\newtheorem{proposition}[theorem]{Proposition}
\newtheorem{lemma}[theorem]{Lemma}

\theoremstyle{definition}
\newtheorem{definition}[theorem]{Definition}
\newtheorem{remark}[theorem]{Remark}
\newtheorem{example}[theorem]{Example}

\newcommand{\M}{\mathcal{M}}
\newcommand{\dv}{dv(g)}
\newcommand{\ds}{d\sigma(g)}

\title[The $p$-Laplacian Quadrature Surface Problem on Riemannian Manifolds]{On the $p$-Laplacian Quadrature Surface Free Boundary Problem on Riemannian Manifolds: Sufficient condition of existence}

\author{Mohammed Barkatou}
\address{ISTM Laboratory, Chouaib Doukkali University, El Jadida, Morocco}
\email{barkatou.m@ucd.ac.ma}

\date{\today}

\begin{document}

\begin{abstract}
We study the $p$-Laplacian quadrature surface free boundary problem on a compact Riemannian manifold $(\M,g)$ of dimension $N \geq 2$, for $1<p<\infty$. Given a nonnegative source $f \in L^p_g(\M)$ with compact support $K$, and $k>0$, one seeks a domain $\Omega \supset K$ on which the overdetermined problem $-\Delta_{p,g} u = f$ in $\Omega$, $u=0$ and $|\nabla_g u|_g = k$ on $\partial\Omega$ admits a solution. Following the geometric and variational framework of Djit\'e--Seck, we prove that the integral condition
\[
(NS)_{p,g} \qquad \int_C f\,dv(g) > k^{p-1}\,|\partial C|_g,
\]
where $C$ is the totally convex hull of $K$, is \emph{sufficient} for the existence of a solution strictly containing $C$. We then show, by an explicit counterexample on the round sphere $S^n_R$, that $(NS)_{p,g}$ is \emph{not necessary} in general under nonnegative curvature: the mechanism is that the perimeter is not monotone with respect to inclusion in positive curvature. Finally, we identify an additional geometric hypothesis---perimeter monotonicity on the admissible class inside a normal convex neighborhood of $C$---under which necessity is restored, and we discuss the critical equality case $\int_C f\,dv(g) = k^{p-1}|\partial C|_g$.
\end{abstract}

\maketitle

\section{Introduction}

\subsection{The $p$-Laplacian quadrature surface problem}

Let $(\M,g)$ be a compact, connected, oriented Riemannian manifold without boundary, of dimension $N \geq 2$. For $1<p<\infty$, the \emph{$p$-Laplace--Beltrami operator} is
\[
\Delta_{p,g} u = \operatorname{div}_g\!\left(|\nabla_g u|_g^{p-2}\nabla_g u\right).
\]
For $p=2$, this reduces to the Laplace--Beltrami operator $\Delta_g$.

Let $f \in L^p_g(\M)$ be a nonnegative function with compact support $K = \operatorname{supp} f$, and let $k>0$. The \emph{$p$-Laplacian quadrature surface free boundary problem}, denoted $QS_p(f,k)$, consists in finding a domain $\Omega \subset \M$ with $K \subset \Omega$ and a function $u_\Omega \in W^{1,p}_0(\Omega)$ such that
\begin{equation}\label{eq:overdetp}
\begin{cases}
-\Delta_{p,g} u_\Omega = f & \text{in } \Omega,\\
u_\Omega = 0 & \text{on } \partial\Omega,\\
|\nabla_g u_\Omega|_g = k & \text{on } \partial\Omega.
\end{cases}
\end{equation}
The third condition is the overdetermined free boundary condition; it is equivalent to the flux condition $-|\nabla_g u_\Omega|_g^{p-2}\partial u_\Omega/\partial\nu_g = k^{p-1}$ on $\partial\Omega$, where $\nu_g$ is the outward unit normal.

In the Euclidean setting, Barkatou \cite{Barkatou2010} proved that for $p=2$ the problem $QS(f,k)$ admits a solution $\Omega$ strictly containing the convex hull $C$ of $K$ if and only if
\begin{equation}\label{eq:NSeucl}
\int_C f(x)\,dx > k\,|\partial C|.
\end{equation}
The purpose of this paper is to investigate the $p$-Laplacian analogue of \eqref{eq:NSeucl} in the Riemannian setting, where $C$ is replaced by the totally convex hull of $K$.

\subsection{Main contributions}

Our contributions are threefold.

\begin{enumerate}[label=(\roman*)]
\item \textbf{Sufficiency.} We prove (Theorem \ref{thm:suffp}) that, under suitable regularity hypotheses and the Riemannian $RC$--GNP condition of Djit\'e--Seck \cite{DjiteSeck2026a}, the condition
\[
(NS)_{p,g} \qquad \int_C f\,dv(g) > k^{p-1}\,|\partial C|_g
\]
is sufficient for the existence of a solution $\Omega^*$ strictly containing $C$.

\item \textbf{Failure of necessity.} We construct (Section \ref{sec:counterp}) an explicit family of counterexamples on the round sphere $S^n_R$ showing that $(NS)_{p,g}$ is \emph{not necessary} in general. More precisely, for every $n \geq 2$, $R>0$ and $1<p<\infty$, there exist a radial source $f$ supported in a small geodesic ball $C = B_g(p,r_0)$ and a constant $k>0$ such that $QS_p(f,k)$ admits a solution $\Omega$ strictly containing $C$, while
\[
\int_C f\,dv(g) < k^{p-1}\,|\partial C|_g.
\]
The mechanism is that on positively curved manifolds the perimeter is not monotone with respect to inclusion: a larger geodesic ball can have smaller perimeter.

\item \textbf{Conditional necessity.} We show (Theorem \ref{thm:neccondp}) that if one additionally assumes that the admissible domains $\Omega$ are contained in a normal convex neighborhood $U$ of $C$ on which the perimeter functional is monotone with respect to inclusion, then $(NS)_{p,g}$ is necessary. This clarifies the geometric obstruction to a sharp Riemannian analogue of \eqref{eq:NSeucl} for the $p$-Laplacian.
\end{enumerate}

We also discuss the critical equality case $\int_C f\,dv(g) = k^{p-1}|\partial C|_g$ in Section \ref{sec:equalityp}.

\subsection{Relation to previous works}

The geometric and variational framework we use is that of Djit\'e--Seck \cite{DjiteSeck2026a}, who introduced the Riemannian $RC$--GNP condition, proved compactness of the admissible class, and established stability of the Dirichlet problem under domain convergence for the Laplacian. The $p$-Laplacian analogue, including the boundary-flux stability hypothesis (SC), is developed in \cite{DjiteSeck2026b}. The Euclidean result goes back to \cite{Barkatou2010, BarkatouSeckLy2005}. Shape-optimization tools on manifolds are taken from \cite{SokolowskiZolesio1992}. Regularity theory for the $p$-Laplacian is taken from \cite{Tolksdorf1983, Vazquez1984, DeSilva2011}.

\section{Preliminaries}\label{sec:prelimp}

\subsection{Riemannian setting and the $p$-Laplacian}

Let $(\M,g)$ be a compact, oriented Riemannian manifold of dimension $N \geq 2$ without boundary. We denote by $\dv$ the Riemannian volume element and by $\ds$ the induced $(N-1)$-dimensional measure on hypersurfaces. For a smooth domain $\Omega \subset \M$, $\nu$ denotes the outward unit normal and $g(\nabla_g u,\nu)$ the normal derivative. We write $|\Omega|_g$ for the volume and $|\partial\Omega|_g$ for the perimeter.

For $1<p<\infty$, the weak formulation of the Dirichlet problem
\[
-\Delta_{p,g} u_\Omega = f \text{ in } \Omega, \qquad u_\Omega = 0 \text{ on } \partial\Omega
\]
is: find $u_\Omega \in W^{1,p}_0(\Omega)$ such that
\[
\int_\Omega |\nabla_g u_\Omega|_g^{p-2} g(\nabla_g u_\Omega, \nabla_g \phi) \dv = \int_\Omega f \phi \dv
\]
for every $\phi \in W^{1,p}_0(\Omega)$. By strict monotonicity of the $p$-Laplacian \cite{Tolksdorf1983, Vazquez1984}, this problem admits a unique weak solution. Moreover, if $f$ is bounded and $\partial\Omega$ is sufficiently regular, then $u_\Omega \in C^{1,\alpha}(\overline{\Omega})$ for some $\alpha>0$.

\subsection{Totally convex hull and regularity hypotheses}

\begin{definition}\label{def:convexhullp}
Let $K \subset \M$ be compact. The \emph{totally convex hull} of $K$, denoted $C = \operatorname{tconv}(K)$, is the smallest closed subset of $\M$ containing $K$ with the property that every minimizing geodesic joining two of its points is entirely contained in it.
\end{definition}

Throughout this paper, we assume:
\begin{itemize}
\item[(H1)] $K = \operatorname{supp} f$ is compact and $f \in L^\infty(\M)$, $f \geq 0$ on $K$, $f \not\equiv 0$.
\item[(H2)] The totally convex hull $C$ of $K$ is a $C^{2,\alpha}$ domain ($0<\alpha<1$), strictly convex, with $\partial C$ smooth.
\item[(H3)] $(\M,g)$ has nonnegative sectional curvature.
\item[(H4)] The unique weak solutions $u_\Omega$ of the $p$-Laplacian Dirichlet problem on admissible domains $\Omega$ belong to $C^{1,\alpha}(\overline{\Omega})$.
\end{itemize}

\begin{remark}\label{rem:regp}
Hypothesis (H4) is automatically satisfied if $f \in L^\infty(\M)$ and $\partial\Omega \in C^{2,\alpha}$, by the regularity theory of Tolksdorf \cite{Tolksdorf1983} and V\'azquez \cite{Vazquez1984}. If one prefers to avoid (H4), the results below may be interpreted in the weak $W^{1,p}$ sense.
\end{remark}

\subsection{The Riemannian $RC$--GNP condition}

We adopt the $RC$--GNP condition introduced in \cite{DjiteSeck2026a}.

\begin{definition}[$RC$--GNP condition, \cite{DjiteSeck2026a}]\label{def:rcgnpp}
Let $C \subset \M$ be a fixed reference domain. A domain $\Omega \subset \M$ satisfies the $RC$--GNP condition relative to $C$ if:
\begin{enumerate}
\item $C \subset \Omega$;
\item $\partial \Omega \in C^{2,\alpha}$ for some $0<\alpha<1$;
\item there exist constants $A>0$, $c_0>0$, $\rho_0>0$, independent of $\Omega$, and a function $h_\Omega \in C^{2,\alpha}(\M)$ such that
\[
\Omega = \{x \in \M : h_\Omega(x) > 0\}, \qquad \partial\Omega = \{x \in \M : h_\Omega(x) = 0\},
\]
with
\[
\|h_\Omega\|_{C^{2,\alpha}(\M)} \leq A, \qquad |\nabla_g h_\Omega|_g \geq c_0 \quad \text{whenever } |h_\Omega| \leq \rho_0;
\]
\item the normal exponential map of $\partial\Omega$ is uniformly nondegenerate.
\end{enumerate}
The admissible class is
\[
\mathcal{A}_C(\M) = \{\Omega \subset \M : \Omega \text{ satisfies the } RC\text{--GNP condition relative to } C\}.
\]
\end{definition}

The following compactness result is established in \cite{DjiteSeck2026a} for the Laplacian and extends verbatim to the $p$-Laplacian, since it only uses the uniform $C^{2,\alpha}$ bounds on the defining functions.

\begin{theorem}[Compactness, \cite{DjiteSeck2026a}]\label{thm:compactp}
Let $(\Omega_j) \subset \mathcal{A}_C(\M)$. Then there exist a subsequence $(\Omega_{j_k})$ and $\Omega \in \mathcal{A}_C(\M)$ such that:
\begin{enumerate}
\item $\Omega_{j_k} \to \Omega$ in the Hausdorff sense;
\item $\Omega_{j_k} \to \Omega$ in the compact sense;
\item $\chi_{\Omega_{j_k}} \to \chi_\Omega$ in $L^1(\M)$;
\item $|\partial\Omega|_g \leq \liminf_{k\to\infty} |\partial\Omega_{j_k}|_g$.
\end{enumerate}
\end{theorem}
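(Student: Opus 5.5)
The argument is a direct compactness extraction. We use the uniform bounds built into Definition~\ref{def:rcgnpp} to get a convergent subsequence of defining functions, and we read off the four convergence modes from that.

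Let $(\Omega_j)\subset\mathcal{A}_C(\M)$ with defining functions $h_j:=h_{\Omega_j}$. Since $\|h_j\|_{C^{2,\alpha}(\M)}\le A$ and $\M$ is compact, the Arzel\`a--Ascoli theorem, applied in finitely many charts, gives a subsequence with $h_{j_k}\to h$ in $C^{2,\beta}(\M)$ for every $\beta<\alpha$. By lower semicontinuity of the H\"older seminorm, $\|h\|_{C^{2,\alpha}}\le A$. Since $\nabla_g h_{j_k}\to\nabla_g h$ uniformly, we get $|\nabla_g h|_g\ge c_0$ on the set $\{|h|<\rho_0\}$: any such point has $|h_{j_k}|\le\rho_0$ for $k$ large. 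We then set $\Omega=\{h>0\}$. Because $0$ is a regular value of $h$, the set $\partial\Omega=\{h=0\}$ is a $C^{2,\alpha}$ hypersurface.

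For membership in $\mathcal{A}_C(\M)$ we check the remaining conditions.
\begin{itemize}
\item \emph{Containment.} We need $C\subset\Omega$. Each $h_{j_k}>0$ on $C$, so $h\ge0$ on $C$. A point of $C$ with $h=0$ would lie on $\partial\Omega$, with $h$ attaining a local minimum relative to $C$ there. This is only a problem on $\partial C$. Strictly, one gets $\overline C\subset\overline\Omega$. If the class is meant with $C\subset\Omega$ strict, I would argue via a uniform interior margin: I expect the paper's class implicitly allows $C\subset\overline\Omega$, or that (H2) and the gradient bound prevent tangency in the limit. This point I would flag and handle by interpreting condition (1) as closed containment if necessary.
\item \emph{Normal exponential map.} Uniform nondegeneracy of the normal exponential map of $\partial\Omega$ follows because the focal distance depends continuously on the second fundamental form. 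That form is controlled by the $C^2$ bounds on $h$ and the lower bound $c_0$, so a uniform lower bound on the focal radius passes to the limit.
\end{itemize}

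Next come the convergence statements. Because $|\nabla h|\ge c_0$ near the zero set and $h_{j_k}\to h$ uniformly, for every $\varepsilon>0$ we have $\{h_{j_k}=0\}\subset\{|h|<\delta_k\}$ with $\delta_k\to0$. The tubular-neighbourhood estimate $\{|h|<\delta\}\subset\{d_g(\cdot,\partial\Omega)<\delta/c_0'\}$ then gives Hausdorff convergence of boundaries and of closures. For compact-set convergence, every compact $L\subset\Omega$ has $\min_L h>0$, hence $L\subset\Omega_{j_k}$ for large $k$; symmetrically, every compact $L\subset\M\setminus\overline\Omega$ eventually lies outside $\overline{\Omega_{j_k}}$. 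The symmetric difference $\Omega_{j_k}\triangle\Omega$ lies in $\{|h|<\delta_k\}$, whose volume is $O(\delta_k)$ by the coarea formula and the gradient bound. This gives $\chi_{\Omega_{j_k}}\to\chi_\Omega$ in $L^1$.

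Finally, perimeter lower semicontinuity follows from $L^1$ convergence of characteristic functions, by lower semicontinuity of total variation on $(\M,g)$. In fact, $C^1$ convergence of the level sets, written as normal graphs over $\partial\Omega$, gives equality $|\partial\Omega_{j_k}|_g\to|\partial\Omega|_g$.

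Nothing in this argument uses the PDE, so it holds for any $p$. The main obstacle is the containment issue noted in the first item, namely keeping $C\subset\Omega$ open and strict in the limit.
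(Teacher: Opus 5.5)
Your argument is correct and follows the route the paper indicates. The paper gives no proof of its own: it cites Djit\'e--Seck and remarks only that compactness uses nothing but the uniform $C^{2,\alpha}$ bounds on the defining functions, which is exactly your Arzel\`a--Ascoli extraction with the four convergence modes read off from $h_{j_k}\to h$. On the containment issue you flag, your first reading is the right one, and the alternatives you float do not hold: there is no uniform interior margin, and tangency really occurs in the limit (the outer parallel sets $\{d_g(\cdot,C)<1/j\}$ converge to $C$, which the paper itself treats as admissible, with a contact set $\partial\Omega^*\cap\partial C$). So $C$ must be read as an open domain, and then your own observation gives $C\subset\Omega$ directly: a zero of $h$ inside $C$ would be an interior minimum, hence a critical point, contradicting $|\nabla_g h|_g\ge c_0$.
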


\begin{theorem}[Stability of the $p$-Laplacian Dirichlet problem, \cite{DjiteSeck2026b}]\label{thm:stabilityp}
Let $\Omega_j, \Omega \in \mathcal{A}_C(\M)$ with defining functions $h_j$ and $h$ such that $h_j \to h$ strongly in $C^{2,\alpha}(\M)$. Let $f \in L^p(\M)$ be fixed with $\operatorname{supp} f \subset C \subset \Omega_j \cap \Omega$. Let $u_{\Omega_j}$ and $u_\Omega$ be the unique weak solutions of the $p$-Laplacian Dirichlet problem, extended by zero outside their domains. Then
\[
u_{\Omega_j} \to u_\Omega \quad \text{strongly in } W^{1,p}(\M).
\]
In particular,
\[
\int_\M |\nabla_g u_{\Omega_j}|_g^p \dv \to \int_\M |\nabla_g u_\Omega|_g^p \dv.
\]
\end{theorem}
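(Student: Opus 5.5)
The plan is to compare the Euler--Lagrange equations and then pass to the limit. Fix the defining functions $h_j\to h$ in $C^{2,\alpha}(\M)$. By the uniform bounds $\|h_j\|_{C^{2,\alpha}}\le A$ and $|\nabla_g h_j|_g\ge c_0$ near the zero set, the domains $\Omega_j$ converge to $\Omega$ in Hausdorff distance. Their symmetric difference $\Omega_j\triangle\Omega$ is contained in a tubular neighbourhood of $\partial\Omega$ of width $O(\|h_j-h\|_{C^0}/c_0)$.

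\textbf{Uniform bounds.} Test the equation on $\Omega_j$ with $u_j:=u_{\Omega_j}$ and use Poincar\'e/Sobolev on the compact manifold. The Poincar\'e constant is uniform because all $\Omega_j$ lie in $\M$ and miss a fixed small ball, by the uniform geometry of the $RC$--GNP class. This gives $\int|\nabla_g u_j|^p\le \|f\|_{L^{p'}}\|u_j\|_{L^p}$, hence $\|u_j\|_{W^{1,p}}\le M$. A subsequence then satisfies $u_j\rightharpoonup u^*$ weakly in $W^{1,p}(\M)$ and strongly in $L^p$.

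\textbf{Identify the limit.} First, $u^*\in W^{1,p}_0(\Omega)$. It vanishes a.e. outside $\Omega$, since $\chi_{\Omega_j}\to\chi_\Omega$ and $u_j=0$ off $\Omega_j$. Because $\partial\Omega$ is $C^{2,\alpha}$, a Sobolev function vanishing a.e. off $\Omega$ lies in $W^{1,p}_0(\Omega)$. This is the Hedberg/stability property for Lipschitz domains.

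Next, for $\phi\in C_c^\infty(\Omega)$, Hausdorff convergence gives $\operatorname{supp}\phi\subset\Omega_j$ for $j$ large. So $\phi$ is admissible and
$$\int |\nabla u_j|^{p-2}g(\nabla u_j,\nabla\phi)=\int f\phi .$$
To pass to the limit in the nonlinear term, use the Minty/monotonicity trick. Test the $\Omega_j$ equation with $u_j-\psi_j$, where $\psi_j\in C_c^\infty(\Omega)$ approximates $u^*$ in $W^{1,p}$. Strong $L^p$ convergence gives $\int f(u_j-\psi_j)\to\int f(u^*-\psi)$. The inequality $(|a|^{p-2}a-|b|^{p-2}b)\cdot(a-b)\ge 0$ then yields
$$\limsup_j\int\big(|\nabla u_j|^{p-2}\nabla u_j-|\nabla u^*|^{p-2}\nabla u^*\big)\cdot(\nabla u_j-\nabla u^*)\le 0 .$$
By the standard $(S_+)$ property of the $p$-Laplacian, $\nabla u_j\to\nabla u^*$ strongly in $L^p$. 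One can also use the elementary inequalities separately for $p\ge2$ and $p<2$.

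The limit equation therefore holds on $C_c^\infty(\Omega)$, and by density on $W^{1,p}_0(\Omega)$. Uniqueness gives $u^*=u_\Omega$. Since the limit is unique, the whole sequence converges. Strong $W^{1,p}$ convergence implies convergence of the $p$-energies.

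\textbf{Main obstacle.} The delicate step is the Minty argument, because $u^*$ is not a priori an admissible test function on $\Omega_j$. Approximating it by $\psi_j$ supported in $\Omega\cap\Omega_j$ requires the interior inclusion $K'\subset\subset\Omega\Rightarrow K'\subset\Omega_j$ for large $j$, which comes from the $C^0$ convergence of $h_j$ and the gradient lower bound. It also requires the fact that $u^*\in W^{1,p}_0(\Omega)$, and this is where the $C^{2,\alpha}$ regularity of $\partial\Omega$ is used. A further point to check is the uniformity of the Poincar\'e constant on a compact manifold without boundary. Here one needs that the $\Omega_j$ omit a set of positive capacity uniformly; otherwise constants would solve the limit problem, since $W^{1,p}_0(\M)$ contains constants. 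I would secure this by noting that $\M\setminus\Omega$ contains a ball, and that by uniform convergence the complements $\M\setminus\Omega_j$ contain a common slightly smaller ball.
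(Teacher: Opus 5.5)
The paper gives no proof of this statement; it is quoted from \cite{DjiteSeck2026b}, so there is no in-paper argument to compare against. Your proof is correct, and it is the standard argument:
\begin{itemize}
\item a uniform $W^{1,p}$ bound from a Poincar\'e inequality;
\item weak compactness;
\item Minty's trick plus the $(S_+)$ property of $\xi\mapsto|\xi|^{p-2}\xi$, to upgrade to strong convergence of the gradients;
\item identification of the limit through the characterization of $W^{1,p}_0(\Omega)$ for regular $\Omega$;
\item uniqueness, to recover convergence of the whole sequence.
\end{itemize}

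Your argument actually proves more than the statement. You use only $\sup_{\M}|h_j-h|\to 0$ together with the nondegeneracy $|\nabla_g h|_g\ge c_0$ near $\{h=0\}$, which makes $\partial\Omega$ a null, regular hypersurface. You never use convergence in $C^{2,\alpha}$. That weaker form is what the proof of Theorem~\ref{thm:existencep} really needs. There the sequence comes from Theorem~\ref{thm:compactp}, and uniform $C^{2,\alpha}$ bounds give at best subsequential $C^{2,\beta}$ convergence of the defining functions for $\beta<\alpha$, not convergence in $C^{2,\alpha}$.

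You also correctly isolate a hidden hypothesis. On a closed manifold one needs $\Omega\neq\M$, equivalently $\{h=0\}\neq\emptyset$, and Definition~\ref{def:rcgnpp} does not explicitly exclude $\Omega=\M$. If $\Omega=\M$, then $W^{1,p}_0(\Omega)$ contains the constants, uniqueness fails, and for $f\ge 0$, $f\not\equiv 0$ there is no solution at all. Your way of producing a common ball in every complement $\M\setminus\Omega_j$ is exactly right: take a point where $h<0$, then use uniform convergence.

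Two small repairs are needed.
\begin{itemize}
\item The statement assumes only $f\in L^p(\M)$, and for $1<p<2$ this does not give $f\in L^{p'}$. So the bound via $\|f\|_{L^{p'}}\|u_j\|_{L^p}$ should be replaced by treating $f$ as an element of $(W^{1,p}(\M))^*$, or by invoking $f\in L^\infty$ from (H1). Once $f$ is such a functional, $\int f(u_j-\psi)\to\int f(u^*-\psi)$ follows from weak convergence alone, with no need for strong $L^p$ convergence.
\item In the Minty step, avoid a $j$-dependent $\psi_j$. Fix $\psi\in C_c^\infty(\Omega)$, which is admissible on $\Omega_j$ for $j$ large. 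Bound the $\limsup$ of the monotonicity integral by $(\|f\|_{(W^{1,p})^*}+M^{p-1})\|u^*-\psi\|_{W^{1,p}}$. Only then let $\psi\to u^*$ in $W^{1,p}_0(\Omega)$.
\end{itemize}
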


\subsection{Comparison principles for the $p$-Laplacian}

We recall the following standard comparison principles \cite{Tolksdorf1983, Vazquez1984}.

\begin{lemma}[Comparison principle]\label{lem:compp}
Let $\Omega \subset \M$ be connected and let $v_1, v_2 \in W^{1,p}(\Omega)$ satisfy
\[
-\Delta_{p,g} v_1 \leq -\Delta_{p,g} v_2 \quad \text{in } \Omega
\]
in the weak sense, and $v_1 \geq v_2$ on $\partial\Omega$ in the trace sense. Then $v_1 \geq v_2$ in $\Omega$.
\end{lemma}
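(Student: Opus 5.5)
The plan is the classical weak-comparison argument: test the difference of the two weak inequalities against the positive part of $v_2-v_1$, then use strict monotonicity of the vector field $a(\xi)=|\xi|_g^{p-2}\xi$ to force that positive part to vanish. One step, the sign bookkeeping in the second paragraph, does not close under the hypothesis exactly as printed. I flag it there.

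\textbf{Step 1 (admissible test function).} Set $w=(v_2-v_1)^+$. Since $v_1\ge v_2$ on $\partial\Omega$ in the trace sense, $(v_2-v_1)^+$ has zero trace, so $w\in W^{1,p}_0(\Omega)$. Also $w\ge 0$, and $\nabla_g w=\chi_{\{v_2>v_1\}}\,(\nabla_g v_2-\nabla_g v_1)$ almost everywhere. The weak inequality between $-\Delta_{p,g}v_1$ and $-\Delta_{p,g}v_2$ may therefore be tested with $w$. Writing $D=\{v_2>v_1\}$, this gives the single scalar relation
\[
\int_{D}\bigl(a(\nabla_g v_1)-a(\nabla_g v_2)\bigr)\cdot(\nabla_g v_2-\nabla_g v_1)\,\dv \;\lessgtr\; 0 ,
\]
with the sign fixed by the direction of the operator inequality.

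\textbf{Step 2 (monotonicity, and the main obstacle).} The standard pointwise inequalities for $a$ are
\[
\bigl(a(\xi)-a(\eta)\bigr)\cdot(\xi-\eta)\ge c_p|\xi-\eta|_g^p \quad (p\ge2),
\]
and
\[
\bigl(a(\xi)-a(\eta)\bigr)\cdot(\xi-\eta)\ge c_p\,|\xi-\eta|_g^2\,(|\xi|_g+|\eta|_g)^{p-2} \quad (1<p<2).
\]
Applied in $T_x\M$ with the metric $g$, they show the integrand above equals minus a nonnegative quantity, which vanishes only where $\nabla_g v_1=\nabla_g v_2$.

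To conclude, one needs the integral to be $\ge 0$, that is, $\int_\Omega\bigl(a(\nabla_g v_1)-a(\nabla_g v_2)\bigr)\cdot\nabla_g\phi\,\dv\ge 0$ for all $\phi\ge0$. This is exactly the statement that $v_1$ is a weak supersolution relative to $v_2$.

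With the hypothesis read literally as $-\Delta_{p,g}v_1\le-\Delta_{p,g}v_2$, the integral is only $\le 0$. The identity then yields only the trivial conclusion $0\ge -(\text{nonnegative})$. This sign check is the crux I expect to be the main obstacle. A direct test shows the literal version cannot be repaired: take $v_2=0$ and let $v_1$ solve $-\Delta_{p,g}v_1=-1$ with $v_1=0$ on $\partial\Omega$. Then $v_1<0$ inside, so the conclusion $v_1\ge v_2$ fails. The argument therefore proves the statement in its intended form, with $v_1$ the supersolution ($-\Delta_{p,g}v_1\ge-\Delta_{p,g}v_2$, $v_1\ge v_2$ on $\partial\Omega$), or equivalently the form with both inequalities reversed ($v_1\le v_2$ on $\partial\Omega$, conclusion $v_1\le v_2$).

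\textbf{Step 3 (conclusion).} In that form, combining Steps 1 and 2 gives $\nabla_g w=0$ almost everywhere in $\Omega$. Since $w\in W^{1,p}_0(\Omega)$ and $\Omega$ is connected, Poincaré's inequality on the bounded domain $\Omega\subset\M$ gives $w=0$, hence $v_1\ge v_2$ in $\Omega$. Compactness of $\M$ makes the Poincaré inequality available for every domain with $\M\setminus\overline\Omega\neq\emptyset$, which holds for the admissible domains in the paper.
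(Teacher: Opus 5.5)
Your diagnosis is correct. With the hypothesis as printed, the lemma is false, and your example shows it: $v_2=0$, $-\Delta_{p,g}v_1=-1$, $v_1=0$ on $\partial\Omega$ gives $v_1=-w$ with $w>0$ the $p$-torsion function, so $v_1<0$. Your argument (testing with $(v_2-v_1)^+$ and using strict monotonicity of $\xi\mapsto|\xi|_g^{p-2}\xi$) is the standard proof of the correctly oriented statement. The paper itself gives no proof and simply cites Tolksdorf and V\'azquez.

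The paper's only use of the lemma, Proposition~\ref{prop:comparisonp}, actually needs your orientation. The claims $u_C\ge 0$ and $u_\Omega\ge 0$ (the latter needed on $\partial C$) come from comparing with $0$ when $-\Delta_{p,g}u=f\ge 0$. The printed hypothesis does not permit this. The inequality $u_C\le u_\Omega$ in $C$ has equal right-hand sides, so it works with either orientation.

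A small point on Step 3: connectedness of $\Omega$ plays no role. What you actually use is that $\M$ is connected and $\M\setminus\overline\Omega\neq\emptyset$: extend $w$ by zero, so it is constant on $\M$ and vanishes on an open set. You are right that $\Omega\neq\M$ is genuinely needed. For $\Omega=\M$ the boundary condition is vacuous, and the pair $v_1$, $v_1+1$ defeats either orientation.
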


\begin{lemma}[Hopf boundary point lemma for the $p$-Laplacian, \cite{Tolksdorf1983, Vazquez1984}]\label{lem:hopfp}
Let $\Omega \subset \M$ satisfy an interior geodesic ball condition at $x_0 \in \partial\Omega$. Let $v_1, v_2 \in C^{1,\alpha}(\Omega)$ be such that
\[
-\Delta_{p,g} v_1 \leq -\Delta_{p,g} v_2 \quad \text{in } \Omega,
\]
$v_1 \geq v_2$ in $\Omega$, and $v_1(x_0) = v_2(x_0)$. If $v_1 \neq v_2$ and $|\nabla v_1(x_0)|>0$ (so the operator is uniformly elliptic at $x_0$), then
\[
|\nabla_g v_1(x_0)|_g > |\nabla_g v_2(x_0)|_g.
\]
\end{lemma}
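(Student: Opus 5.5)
Set $w = v_1 - v_2$, so that $w \ge 0$ in $\Omega$, $w(x_0)=0$ and $w \not\equiv 0$. The plan is to turn the nonlinear inequality $-\Delta_{p,g} v_1 \le -\Delta_{p,g} v_2$ into a \emph{linear} uniformly elliptic inequality for $w$ near $x_0$. We then apply the classical Hopf lemma to $w$ and convert the resulting normal derivative information into the gradient inequality.

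\emph{Step 1 (linearization).} Write $a(\xi) = |\xi|_g^{p-2}\xi$ and $\xi_t = \nabla_g v_2 + t\nabla_g w$ for $t\in[0,1]$, so that
\[
a(\nabla_g v_1) - a(\nabla_g v_2) = \mathcal{A}(x)\nabla_g w, \qquad \mathcal{A}(x) = \int_0^1 Da(\xi_t)\,dt .
\]
The eigenvalues of $Da(\xi)$ lie between $\min(1,p-1)|\xi|^{p-2}$ and $\max(1,p-1)|\xi|^{p-2}$. The standard estimate $\int_0^1 |\xi_t|^{p-2}\,dt \simeq (|\xi_0|+|\xi_1|)^{p-2}$, valid for all $1<p<\infty$, then gives two-sided bounds on $\mathcal{A}$. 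Since $|\nabla_g v_1(x_0)|_g>0$ and $v_1 \in C^{1,\alpha}$, the quantity $|\xi_0|+|\xi_1|$ is bounded above and away from $0$ on a small neighborhood $V$ of $x_0$. Hence $\mathcal{A}$ is uniformly elliptic and bounded on $V\cap\Omega$, with continuous coefficients. The hypothesis becomes
\[
-\operatorname{div}_g(\mathcal{A}\nabla_g w) \ge 0 \quad \text{weakly in } V\cap\Omega ,
\]
so $w$ is a nonnegative supersolution of a linear uniformly elliptic equation there.

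\emph{Step 2 (strict positivity and Hopf barrier).} Take the interior geodesic ball $B = B_g(y,r) \subset \Omega$ with $x_0 \in \partial B$, shrunk so that $B \subset V$. By the strong maximum principle (weak Harnack inequality) for $\mathcal{A}$, either $w>0$ in $B$ or $w \equiv 0$ on $B$. The second alternative is excluded by propagating the strong maximum principle along chains of balls in the connected set where $\nabla v_1\neq 0$, together with $w\not\equiv 0$; near $x_0$ this is what is used.

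Next, in geodesic normal coordinates centered at $y$, consider on the annulus $B\setminus \overline{B_g(y,r/2)}$ the barrier
\[
\psi = e^{-\lambda d_g(y,\cdot)^2} - e^{-\lambda r^2}.
\]
For $\lambda$ large, $\psi$ is a strict subsolution, $-\operatorname{div}_g(\mathcal{A}\nabla_g\psi) < 0$. The only geometric input is that $d_g(y,\cdot)^2$ is smooth with Hessian close to $2g$ for $r$ small. Comparing $w$ with $\varepsilon\psi$ via the linear comparison principle yields $w \ge \varepsilon\psi$, and hence $\partial_{-\nu} w(x_0) \ge \varepsilon\,\partial_{-\nu}\psi(x_0) > 0$, where $\nu$ is the outward normal.

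\emph{Step 3 (from normal derivative to gradients).} Since $w\ge 0$ on $\overline\Omega$ with minimum $0$ at $x_0$, the tangential derivatives of $w$ along $\partial\Omega$ vanish at $x_0$. Therefore
\[
\nabla_g v_1(x_0) - \nabla_g v_2(x_0) = -c\,\nu, \qquad c>0 .
\]
This is the step I expect to be the delicate one: turning this identity into $|\nabla_g v_1(x_0)|_g > |\nabla_g v_2(x_0)|_g$ requires sign information on $v_2$. I would make explicit the setting in which the lemma is used: $v_1, v_2 \ge 0$ and both vanish on $\partial\Omega$ near $x_0$ (as for $u_\Omega$ and the comparison functions). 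Then both gradients are nonpositive multiples of $\nu$, namely $\nabla_g v_i(x_0) = -a_i\nu$ with $a_i \ge 0$. Consequently $a_1 = a_2 + c$, and $|\nabla_g v_1(x_0)|_g = a_1 > a_2 = |\nabla_g v_2(x_0)|_g$.
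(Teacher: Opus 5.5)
\textbf{There is a genuine gap, and it sits in Step~1.} With $w=v_1-v_2$ and $a(\nabla_g v_1)-a(\nabla_g v_2)=\mathcal{A}\nabla_g w$, the stated hypothesis gives
\[
-\operatorname{div}_g(\mathcal{A}\nabla_g w)=(-\Delta_{p,g}v_1)-(-\Delta_{p,g}v_2)\le 0 .
\]
So $w$ is a nonnegative \emph{sub}solution of the linearized operator, not a supersolution. Neither the strong minimum principle nor your Hopf barrier applies to it.

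This cannot be patched, because the lemma as written is false. Take $p=2$ and a small geodesic ball $\Omega=B_g(y,r)$ with $x_0\in\partial\Omega$. Choose it inside a normal neighborhood of $x_0$ on which $\Delta_g\big(d_g(x_0,\cdot)^2\big)>0$; this holds near $x_0$, where the value is $2N$. Let $v_2$ solve $-\Delta_g v_2=1$ in $\Omega$ with $v_2=0$ on $\partial\Omega$, and set $v_1=v_2+d_g(x_0,\cdot)^2$. Then:
\begin{itemize}
\item $-\Delta_g v_1\le-\Delta_g v_2$ and $v_1\ge v_2$;
\item $v_1(x_0)=v_2(x_0)=0$ and $v_1\neq v_2$;
\item $\nabla_g v_1(x_0)=\nabla_g v_2(x_0)\neq 0$, by the classical Hopf lemma applied to $v_2$.
\end{itemize}
So the strict inequality fails.

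What your Steps~1--2 actually prove is the standard version with the reversed hypothesis $-\Delta_{p,g}v_1\ge-\Delta_{p,g}v_2$, i.e.\ $v_1$ is a supersolution lying above the subsolution $v_2$. The paper's Lemma~\ref{lem:compp} has the same reversal. The paper supplies no proof of either lemma, only the citation to Tolksdorf and V\'azquez, so nothing there repairs this. You should say explicitly that you are proving the corrected lemma.

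\textbf{Remaining gaps in the corrected version.} Your route (linearize, apply a linear Hopf lemma, read off gradients) is the right one, but three steps need more than you give.

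First, your worry about Step~3 is justified. Without sign information the conclusion fails even under the corrected hypothesis. In one variable, on $(0,1)$ with $x_0=0$, take $v_2=-x$ and $v_1=-x+\varepsilon x(1-x)$ with $0<\varepsilon<1$; then $|v_1'(0)|=1-\varepsilon<1=|v_2'(0)|$. So an extra sign hypothesis must genuinely be added to the statement. Assuming $v_2\ge 0$ near $x_0$ with $v_2(x_0)=0$ already suffices, and it covers the intended application.

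Second, excluding $w\equiv 0$ on the interior ball by chains of balls is not justified. $\mathcal{A}$ is uniformly elliptic only where $|\nabla_g v_1|_g+|\nabla_g v_2|_g>0$. Nothing guarantees that this set links the ball to a region where $w>0$, and for $p\neq 2$ strong comparison across critical points is genuinely delicate. Assume instead that $w\not\equiv 0$ in every neighborhood of $x_0$ in $\Omega$; this is automatic for $p=2$ on connected $\Omega$.

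Third, the barrier inequality $-\operatorname{div}_g(\mathcal{A}\nabla_g\psi)<0$ involves $\operatorname{div}_g\mathcal{A}$. With $v_i$ only $C^{1,\alpha}$, the coefficients $\mathcal{A}$ are only $C^{0,\alpha}$, so $\operatorname{div}_g\mathcal{A}$ is merely a distribution and taking $\lambda$ large gives no pointwise sign. You need either to cite a boundary point lemma for divergence-form operators with H\"older (or Dini) continuous coefficients, or to assume enough regularity near $x_0$ that $\operatorname{div}_g\mathcal{A}$ is bounded.
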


\subsection{Boundary-flux stability hypothesis}

Following \cite{DjiteSeck2026b}, we introduce the following stability hypothesis, which is needed for the constructive approximation of solutions via the iterative Bernoulli method (see Remark \ref{rem:sc} below).

\begin{definition}[Boundary-flux stability (SC), \cite{DjiteSeck2026b}]\label{def:sc}
Let $\Omega_j \in \mathcal{A}_C(\M)$ with $\Omega_j \to \Omega^* \in \mathcal{A}_C(\M)$ in the Hausdorff sense. Let $u_j = u_{\Omega_j}$ and $u^* = u_{\Omega^*}$. We assume that there exist boundary identifications $\Phi_j : \partial\Omega^* \to \partial\Omega_j$ such that $\Phi_j \to \operatorname{Id}_{\partial\Omega^*}$ in the relevant boundary topology, and
\[
\sup_{x \in \partial\Omega^*} \left| |\nabla_g u_j(\Phi_j(x))|_g^{p-2} \frac{\partial u_j}{\partial\nu_g}(\Phi_j(x)) - |\nabla_g u^*(x)|_g^{p-2} \frac{\partial u^*}{\partial\nu_g}(x) \right| \to 0.
\]
\end{definition}

\begin{remark}\label{rem:sc}
Hypothesis (SC) is not used in the proof of the sufficiency Theorem \ref{thm:suffp}, which relies only on compactness (Theorem \ref{thm:compactp}), stability of the Dirichlet problem (Theorem \ref{thm:stabilityp}), and the shape derivative formula (Theorem \ref{thm:shapederivp}). It is required only for the \emph{constructive} approach based on the iterative Bernoulli method of \cite{DjiteSeck2026b}, where one passes to the limit in the overdetermined boundary condition. In the spherical example of Section \ref{sec:counterp}, (SC) is automatically satisfied because the domains are geodesic balls and the solutions are radial.
\end{remark}

\section{Shape optimization formulation}\label{sec:shapep}

Following \cite{Barkatou2010, DjiteSeck2026b}, we reformulate $QS_p(f,k)$ as a shape optimization problem.

\subsection{The shape functional}

For $\Omega \in \mathcal{A}_C(\M)$, let $u_\Omega$ be the unique solution of
\[
-\Delta_{p,g} u_\Omega = f \text{ in } \Omega, \qquad u_\Omega = 0 \text{ on } \partial\Omega.
\]
We consider the functional
\begin{equation}\label{eq:functionalp}
J_{p,g}(\Omega) = -\frac{1}{p}\int_\Omega |\nabla_g u_\Omega|_g^p \dv + \frac{k^p(p-1)}{p} |\Omega|_g.
\end{equation}
For $p=2$, this becomes $-\frac{1}{2}\int_\Omega |\nabla_g u_\Omega|_g^2 \dv + \frac{k^2}{2}|\Omega|_g$, which coincides with the functional used in the Euclidean case up to a factor of $1/2$.

\subsection{Existence of a minimizer}

\begin{theorem}[Existence of an optimal domain]\label{thm:existencep}
There exists $\Omega^* \in \mathcal{A}_C(\M)$ such that
\[
J_{p,g}(\Omega^*) = \min_{\Omega \in \mathcal{A}_C(\M)} J_{p,g}(\Omega).
\]
\end{theorem}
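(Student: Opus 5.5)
We argue by the direct method of the calculus of variations, using the compactness of the admissible class (Theorem \ref{thm:compactp}) and the stability of the Dirichlet problem (Theorem \ref{thm:stabilityp}).

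\textbf{Step 1: $J_{p,g}$ is bounded below on $\mathcal{A}_C(\M)$.} Testing the weak formulation with $\phi=u_\Omega$ gives the energy identity
\[
\int_\Omega |\nabla_g u_\Omega|_g^p \dv = \int_\Omega f u_\Omega \dv .
\]
Since $\M$ is compact, $|\Omega|_g\le |\M|_g$. Extend $u_\Omega$ by zero. By H\"older's inequality and a Sobolev/Poincar\'e-type inequality for $W^{1,p}_0(\Omega)$ functions, we get
\[
\int f u_\Omega \le \|f\|_{L^{p'}}\|u_\Omega\|_{L^p} \le c\,\|f\|_{L^{p'}}\|\nabla_g u_\Omega\|_{L^p}.
\]
The constant $c$ must be uniform in $\Omega$. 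This holds because $\Omega\ne\M$ for admissible $\Omega$: the boundary $\partial\Omega$ is nonempty, with uniformly controlled defining function. Alternatively, one can use the uniform $C^{2,\alpha}$ bounds together with the fact that $\M\setminus\Omega$ contains a ball of uniform size.

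We then obtain
\[
\|\nabla_g u_\Omega\|_{L^p}^{p-1}\le c\|f\|_{L^{p'}} .
\]
Since $f\in L^\infty$ by (H1), $\|f\|_{L^{p'}}$ is finite. Hence $J_{p,g}(\Omega)\ge -M$ for a constant $M$ independent of $\Omega$. The class $\mathcal{A}_C(\M)$ is nonempty; this is implicit in the setup, for instance a small tubular enlargement of $C$ defined through the signed distance. So $m:=\inf J_{p,g}$ is finite.

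\textbf{Step 2: extract a convergent minimizing sequence.} Take a minimizing sequence $(\Omega_j)\subset\mathcal{A}_C(\M)$. By Theorem \ref{thm:compactp}, a subsequence converges to some $\Omega^*\in\mathcal{A}_C(\M)$ in the Hausdorff and compact senses, with $\chi_{\Omega_j}\to\chi_{\Omega^*}$ in $L^1$. In particular, $|\Omega_j|_g\to|\Omega^*|_g$, so the volume term of \eqref{eq:functionalp} passes to the limit.

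For the energy term we want to invoke Theorem \ref{thm:stabilityp}, which requires $h_j\to h^*$ strongly in $C^{2,\alpha}$. The uniform bound $\|h_j\|_{C^{2,\alpha}}\le A$ only gives, via Arzel\`a--Ascoli, convergence in $C^{2,\beta}$ for every $\beta<\alpha$. So we pass to a further subsequence with $h_j\to h$ in $C^{2,\beta}$. The limit $h$ still satisfies the $C^{2,\alpha}$ bound, by lower semicontinuity of H\"older seminorms, and the nondegeneracy $|\nabla_g h|_g\ge c_0$ near $\{h=0\}$. Hence $h$ defines $\Omega^*$.

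\textbf{Step 3 (main obstacle): pass to the limit in the energy.} The delicate point is this mismatch between $C^{2,\alpha}$ and $C^{2,\beta}$ convergence. I would handle it by observing that the proof of Theorem \ref{thm:stabilityp} only needs Hausdorff/compact convergence plus uniform exterior regularity, which gives $\gamma$-convergence. Alternatively, the statement can be read with $\alpha$ replaced by $\beta$.

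Either way we get $u_{\Omega_j}\to u_{\Omega^*}$ strongly in $W^{1,p}(\M)$, and hence
\[
\int|\nabla_g u_{\Omega_j}|_g^p\to\int|\nabla_g u_{\Omega^*}|_g^p .
\]
Therefore $J_{p,g}(\Omega_j)\to J_{p,g}(\Omega^*)$. It follows that $J_{p,g}(\Omega^*)=m$, and $\Omega^*$ attains the minimum.
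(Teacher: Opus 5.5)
Your proposal is correct and follows the same direct-method argument as the paper: a minimizing sequence, Theorem~\ref{thm:compactp} to obtain a limit domain $\Omega^*$, Theorem~\ref{thm:stabilityp} to pass to the limit in the energy term, and $L^1$ convergence of $\chi_{\Omega_j}$ for the volume term. Your Step~1 is not actually needed, because continuity of $J_{p,g}$ along the extracted subsequence already forces the infimum to equal the finite value $J_{p,g}(\Omega^*)$. Your remark that Theorem~\ref{thm:compactp} only yields $C^{2,\beta}$ ($\beta<\alpha$) convergence of the defining functions, not the $C^{2,\alpha}$ convergence that Theorem~\ref{thm:stabilityp} requires, identifies a hypothesis mismatch that the paper's proof passes over silently, and either of your two ways around it is reasonable.
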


\begin{proof}
Let $(\Omega_j) \subset \mathcal{A}_C(\M)$ be a minimizing sequence. By Theorem \ref{thm:compactp}, there exist a subsequence and $\Omega^* \in \mathcal{A}_C(\M)$ such that $\Omega_j \to \Omega^*$ in the Hausdorff and compact senses and $\chi_{\Omega_j} \to \chi_{\Omega^*}$ in $L^1(\M)$. By Theorem \ref{thm:stabilityp}, $u_{\Omega_j} \to u_{\Omega^*}$ strongly in $W^{1,p}(\M)$, hence
\[
\int_\M |\nabla_g u_{\Omega_j}|_g^p \dv \to \int_\M |\nabla_g u_{\Omega^*}|_g^p \dv.
\]
The volume term converges by $L^1$ convergence of characteristic functions. Therefore,
\[
J_{p,g}(\Omega^*) \leq \liminf_{j\to\infty} J_{p,g}(\Omega_j) = \inf_{\Omega \in \mathcal{A}_C(\M)} J_{p,g}(\Omega).
\]
The reverse inequality is trivial, so $\Omega^*$ is a minimizer.
\end{proof}

\subsection{Comparison with the reference domain}

Let $u_C$ be the unique solution of
\[
-\Delta_{p,g} u_C = f \text{ in } C, \qquad u_C = 0 \text{ on } \partial C.
\]
Since $f \geq 0$ and $f \not\equiv 0$, the strong maximum principle \cite{Vazquez1984} gives $u_C > 0$ in $C$.

\begin{proposition}\label{prop:comparisonp}
For every $\Omega \in \mathcal{A}_C(\M)$,
\[
0 \leq u_C \leq u_\Omega \quad \text{in } C.
\]
If $C \subsetneq \Omega$ and $f \not\equiv 0$, then $u_C < u_\Omega$ in $C$.
\end{proposition}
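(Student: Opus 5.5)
The plan is to apply the comparison principle (Lemma \ref{lem:compp}) on $C$ to $u_\Omega$ and $u_C$, and then upgrade to strict inequality with the strong maximum principle.

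\textbf{Step 1: nonnegativity.} First I show $u_\Omega \geq 0$ in $\Omega$. This follows from Lemma \ref{lem:compp} on $\Omega$ with $v_1 = u_\Omega$ and $v_2 = 0$: indeed $-\Delta_{p,g}u_\Omega = f \geq 0 = -\Delta_{p,g}0$, and both vanish on $\partial\Omega$. Alternatively, test the weak formulation with $\phi = u_\Omega^- \in W^{1,p}_0(\Omega)$ to get $\int |\nabla_g u_\Omega^-|^p_g\,dv(g) \le 0$. The same argument gives $u_C \geq 0$ in $C$.

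\textbf{Step 2: the inequality $u_C \le u_\Omega$.} I restrict $u_\Omega$ to $C$. Since $\operatorname{supp} f \subset C \subset \Omega$, the function $u_\Omega|_C$ is a weak solution of $-\Delta_{p,g} v = f$ in $C$, because test functions in $W^{1,p}_0(C)$ extend by zero to $W^{1,p}_0(\Omega)$. Its trace on $\partial C$ is $u_\Omega|_{\partial C} \ge 0 = u_C|_{\partial C}$. Lemma \ref{lem:compp} on $C$ (connected, being convex) with $v_1 = u_\Omega$, $v_2 = u_C$ and equal right-hand sides then gives $u_\Omega \ge u_C$ in $C$.

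\textbf{Step 3: strict inequality when $C \subsetneq \Omega$.} First, $u_\Omega > 0$ in $\Omega$, by the strong maximum principle of V\'azquez \cite{Vazquez1984} applied to $u_\Omega \ge 0$, $u_\Omega \not\equiv 0$, $-\Delta_{p,g}u_\Omega \ge 0$, on connected $\Omega$. Connectedness holds because $\Omega$ is a domain. Since $C \subsetneq \Omega$ and $C$ is closed in $\Omega$, while $\Omega$ is open and connected, we must have $\partial C \cap \Omega \neq \emptyset$. In fact $\partial C \subset \overline C \subset \Omega$, because $C$ is compact inside the open set $\Omega$ (here I read $C\subset\Omega$ with $C$ closed). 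Hence $u_\Omega > 0 = u_C$ on all of $\partial C$.

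It remains to propagate this strict inequality into the interior of $C$. This is the main obstacle, since the strong comparison principle for the $p$-Laplacian is delicate for $p \neq 2$: it can fail where the gradients vanish, and the two functions solve the same equation with $f$ possibly vanishing on parts of $C$. My approach is a contact-set argument. Suppose $Z = \{x \in C : u_\Omega(x) = u_C(x)\}$ is nonempty. By continuity (H4) and Step 3, $Z$ is a compact subset of the interior of $C$. Let $m = \min_{\partial C}(u_\Omega - u_C) > 0$. Then $w = u_C + m$ solves the same equation as $u_C$ and satisfies $w \le u_\Omega$ on $\partial C$, so Lemma \ref{lem:compp} gives $u_C + m \le u_\Omega$ in $C$, contradicting $Z \neq \emptyset$.

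This argument uses only the weak comparison principle, together with the fact that adding a constant preserves the operator, so no strong comparison or Hopf lemma is needed. Strict positivity of $f$ is not required either, only $f \not\equiv 0$, and that only through Step 3.
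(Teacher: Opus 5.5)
Your argument follows the paper's route. The paper's entire proof is ``follows from Lemma~\ref{lem:compp}'', and your Steps 1--2 are exactly that. Step 3 supplies what the one-liner omits. Comparing $u_\Omega$ with the shifted solution $u_C+m$ turns the weak comparison principle into a strict one without any strong comparison principle for $-\Delta_{p,g}$. That is the right way to avoid the degeneracy at critical points when $p\neq 2$. In Step 1 you use the standard orientation $-\Delta_{p,g}v_1\ge-\Delta_{p,g}v_2$. Lemma~\ref{lem:compp} as printed has this inequality reversed, which is a misprint, and your test with $u_\Omega^-$ bypasses it anyway.

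The step that needs care is $\partial C\subset\Omega$, on which $m>0$ rests. It holds literally under Definition~\ref{def:convexhullp}, where $C$ is closed, and then your proof is complete. However, the paper also treats $C$ as an open domain whose boundary may touch $\partial\Omega$:
\begin{itemize}
\item for closed $C$ the hypothesis $C\subsetneq\Omega$ in the statement would be vacuous;
\item the proof of Theorem~\ref{thm:suffp} contemplates $\Omega^*=C$;
\item Theorem~\ref{thm:optimalityp} has a contact set $\partial\Omega^*\cap\partial C$.
\end{itemize}
If $\partial C\cap\partial\Omega\neq\emptyset$, then $m=0$ and Step 3 yields nothing.

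A repair along your lines:
\begin{itemize}
\item By the Hopf lemma of V\'azquez applied to $u_C$, $|\nabla_g u_C|_g>0$ on $\partial C$. Hence it is also positive on a collar $N_\varepsilon=\{x\in C: d_g(x,\partial C)<\varepsilon\}$.
\item On $N_\varepsilon$, $w=u_\Omega-u_C\ge 0$ solves the linear equation $\operatorname{div}_g(A\nabla_g w)=0$, with $A=\int_0^1 Da(\nabla_g u_C+t\nabla_g w)\,dt$ and $a(\xi)=|\xi|_g^{p-2}\xi$. This equation is locally uniformly elliptic on $N_\varepsilon$ because $\nabla_g u_C\neq 0$ there.
\item $w>0$ near the nonempty set $\partial C\cap\Omega$, so the linear strong maximum principle gives $w>0$ on $N_\varepsilon$, which is connected when $\partial C$ is.
\item Your shift argument applied on $\{x\in C: d_g(x,\partial C)>\varepsilon/2\}$ then gives $w>0$ on the rest of $C$.
\end{itemize}
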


The proof follows from Lemma \ref{lem:compp}.

\section{Shape derivative and optimality conditions}\label{sec:shape-derivp}

\subsection{Shape derivative of the functional}

Let $X \in C^2(T\M)$ be a vector field vanishing in a neighborhood of $C$, and let $\Phi_t$ be its local flow. Define $\Omega_t = \Phi_t(\Omega)$. The normal velocity is $V_X = g(X,\nu)$ on $\partial\Omega$. The shape derivative of the $p$-energy functional is given by the following formula, established in \cite{DjiteSeck2026b} (see also \cite{SokolowskiZolesio1992}).

\begin{theorem}[Shape derivative of the $p$-energy, \cite{DjiteSeck2026b}]\label{thm:shapederivp}
Let $\Omega \in \mathcal{A}_C(\M)$ with $u_\Omega \in C^{1,\alpha}(\overline{\Omega})$, and let $X \in C^2(T\M)$ generate admissible deformations. Then
\[
\frac{d}{dt}\bigg|_{t=0} \left(\frac{1}{p}\int_{\Omega_t} |\nabla_g u_{\Omega_t}|_g^p \dv\right)
= +\frac{p-1}{p} \int_{\partial\Omega} |\nabla_g u_\Omega|_g^p \, g(X,\nu) \ds.
\]
\end{theorem}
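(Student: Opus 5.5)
The plan is to reduce the computation to the Hadamard formula for the minimal $p$-energy, and then convert back to the functional in the statement. For $\Omega\in\mathcal{A}_C(\M)$ set
\[
E(\Omega)=\min_{v\in W^{1,p}_0(\Omega)}\Big(\frac1p\int_\Omega|\nabla_g v|_g^p\,\dv-\int_\Omega f v\,\dv\Big).
\]
Testing the weak formulation with $\phi=u_\Omega$ gives $\int_\Omega|\nabla_g u_\Omega|_g^p\dv=\int_\Omega f u_\Omega\dv$. Hence $E(\Omega)=-\frac{p-1}{p}\int_\Omega|\nabla_g u_\Omega|_g^p\dv$, and the quantity in the statement is a fixed multiple of $E(\Omega_t)$. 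It therefore suffices to compute $\frac{d}{dt}E(\Omega_t)$ at $t=0$.

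\textbf{Step 1 (transport to a fixed domain).} Since $X$ vanishes near $C\supset\operatorname{supp}f$, the source is unaffected by $\Phi_t$. Write $E(\Omega_t)=\min_{w\in W^{1,p}_0(\Omega)}\mathcal{E}_t(w)$, where $\mathcal{E}_t(w)$ is the energy of $w\circ\Phi_t^{-1}$ on $\Omega_t$, pulled back to $\Omega$. Concretely,
\[
\mathcal{E}_t(w)=\frac1p\int_\Omega\big|(D\Phi_t)^{-T}\nabla_g w\big|_{\Phi_t^*g}^p\,J_t\,\dv-\int_\Omega f w\,\dv,
\]
with Jacobian $J_t$. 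The map $t\mapsto\mathcal{E}_t(w)$ is $C^1$, and the minimizer $w_t=u_{\Omega_t}\circ\Phi_t$ is unique by strict convexity. It converges to $u_\Omega$ in $W^{1,p}$ by Theorem \ref{thm:stabilityp}. A Danskin/envelope argument then gives $\frac{d}{dt}E(\Omega_t)|_{t=0}=\partial_t\mathcal{E}_t(u_\Omega)|_{t=0}$, so no material derivative of $u$ is needed.

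\textbf{Step 2 (volume-to-boundary conversion).} Differentiating the integrand gives the distributed formula
\[
\partial_t\mathcal{E}_t(u)\big|_{0}=\int_\Omega\Big(\frac1p|\nabla_g u|_g^p\operatorname{div}_gX-|\nabla_g u|_g^{p-2}g(\nabla_X\nabla_g u\ \text{sym. part},\nabla_g u)\Big)\dv,
\]
which is a contraction of the $p$-energy-momentum tensor with $\nabla X$. Integrate by parts using $-\Delta_{p,g}u=f$ and the fact that $f=0$ on $\operatorname{supp}X$; the interior terms cancel. On $\partial\Omega$ we use $u=0$, so that $\nabla_g u=\partial_\nu u\,\nu$ and $|\nabla_g u|_g=|\partial_\nu u|$. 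The boundary term collapses to a constant multiple of $\int_{\partial\Omega}|\nabla_g u|_g^p\,g(X,\nu)\ds$. Hypothesis (H4), namely $u\in C^{1,\alpha}(\overline\Omega)$ with $\partial\Omega\in C^{2,\alpha}$, justifies the traces and the integration by parts.

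\textbf{Step 3 (constant bookkeeping, main obstacle).} The delicate point is tracking the coefficients. These are the $\frac1p$ in front of the divergence term, the coefficient $1$ in front of the $\nabla X$-term, and the identity $E=-\frac{p-1}{p}\int|\nabla_g u|_g^p$ used to translate back. Together they must produce exactly the factor $\frac{p-1}{p}$ in front of $\int_{\partial\Omega}|\nabla_g u_\Omega|_g^p g(X,\nu)\ds$ claimed in Theorem \ref{thm:shapederivp}.

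I would first carry out the computation for $p=2$ as a check. There the result must reduce to the classical Hadamard formula $\frac{d}{dt}\frac12\int|\nabla u|^2=\frac12\int_{\partial\Omega}|\partial_\nu u|^2V$, which matches $\frac{p-1}{p}=\frac12$. I would then verify on the radial case of a geodesic ball, where $u$ is explicit. This pins down the normalization before treating general $p$.

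Finally, the Riemannian corrections (the derivative of $\Phi_t^*g$ and the term $\operatorname{div}_g X$) enter only through $\mathcal{L}_Xg$. They are absorbed in the tensor identity of Step 2, so the computation is tensorial and holds verbatim on $\M$.
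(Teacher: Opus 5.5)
\textbf{There is a genuine gap in Step 3, and the statement itself is wrong for $p\neq 2$.} In Step 3 you defer exactly the computation that decides the formula, and assert that the constants \emph{must} produce $\frac{p-1}{p}$. They do not.

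Your Steps 1--2 are the right method, with one correction: the second term of your integrand should read $|\nabla_g u|_g^{p-2}g(\nabla_{\nabla_g u}X,\nabla_g u)$, a derivative of $X$, not $\nabla_X\nabla_g u$. The distributed derivative is then $\int_\Omega\langle T,\nabla X\rangle\,\dv$ with $T=\frac1p|\nabla_g u|_g^p\,g-|\nabla_g u|_g^{p-2}\,\nabla_g u\otimes\nabla_g u$. One has $\operatorname{div}_g T=f\,\nabla_g u=0$ on $\operatorname{supp}X$, and $\nabla_g u=(\partial_\nu u)\,\nu$ on $\partial\Omega$. Hence
\[
\frac{d}{dt}\Big|_{t=0}E(\Omega_t)=\Big(\frac1p-1\Big)\int_{\partial\Omega}|\nabla_g u_\Omega|_g^p\,g(X,\nu)\,\ds=-\frac{p-1}{p}\int_{\partial\Omega}|\nabla_g u_\Omega|_g^p\,g(X,\nu)\,\ds .
\]
Because $\frac1p\int_\Omega|\nabla_g u_\Omega|_g^p\,\dv=-\frac{1}{p-1}E(\Omega)$, this yields
\[
\frac{d}{dt}\Big|_{t=0}\Big(\frac1p\int_{\Omega_t}|\nabla_g u_{\Omega_t}|_g^p\,\dv\Big)=\frac1p\int_{\partial\Omega}|\nabla_g u_\Omega|_g^p\,g(X,\nu)\,\ds .
\]
The factor is $\frac1p$, not $\frac{p-1}{p}$. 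So the statement as written is false for every $p\neq2$, and Step 3 cannot be closed by any bookkeeping. What your argument actually proves is $\frac{d}{dt}\int_{\Omega_t}|\nabla_g u_{\Omega_t}|_g^p\,\dv=\int_{\partial\Omega}|\nabla_g u_\Omega|_g^p\,g(X,\nu)\,\ds$.

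Your planned checks show how this slips through. At $p=2$ one has $\frac1p=\frac{p-1}{p}$, so the Hadamard test cannot detect the discrepancy. The radial test, done for general $p$, refutes the claim. Take a geodesic ball $B_r\subset S^n_R$ and a radial $f$ supported in $B_{r_0}$, $r_0<r$. The flux identity $J_R(s)|U'(s)|^{p-1}=\int_0^{s}J_R F$ does not involve the outer radius. Therefore $\frac{d}{dr}\int_{B_r}|\nabla_g u_{B_r}|_g^p\,\dv=|\partial B_r|_g\,|U'(r)|^p$, which gives coefficient $\frac1p$ after the prefactor.

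The paper gives no proof to compare with; it quotes the formula from the cited work. The error also propagates downstream. With the correct derivative,
\[
dJ_{p,g}(\Omega)[X]=\frac1p\int_{\partial\Omega}\big((p-1)k^p-|\nabla_g u_\Omega|_g^p\big)g(X,\nu)\,\ds,
\]
so minimizers of $J_{p,g}$ satisfy $|\nabla_g u|_g=(p-1)^{1/p}k$ on the free boundary. To obtain $k$, the energy term in $J_{p,g}$ should be $E(\Omega)=-\frac{p-1}{p}\int_\Omega|\nabla_g u_\Omega|_g^p\,\dv$.

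A secondary point: the identity $\operatorname{div}_g T=f\,\nabla_g u$ uses second derivatives of $u$. For $p\neq2$, (H4) does not provide these near critical points of $u$, so this step needs a regularization argument.
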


\begin{remark}
For $p=2$, the formula reduces to the classical Hadamard formula
\[
\frac{d}{dt}\bigg|_{t=0} \left(\frac{1}{2}\int_{\Omega_t} |\nabla_g u_{\Omega_t}|_g^2 \dv\right)
= +\frac{1}{2} \int_{\partial\Omega} |\nabla_g u_\Omega|_g^2 \, g(X,\nu) \ds,
\]
which confirms the positive sign.
\end{remark}

The derivative of the volume term is
\[
\frac{d}{dt}\bigg|_{t=0} |\Omega_t|_g = \int_{\partial\Omega} g(X,\nu) \ds.
\]
Therefore, the shape derivative of $J_{p,g}$ is
\begin{equation}\label{eq:shapederivp}
dJ_{p,g}(\Omega)[X] = \frac{p-1}{p} \int_{\partial\Omega} \left(k^p - |\nabla_g u_\Omega|_g^p\right) g(X,\nu) \ds.
\end{equation}

\subsection{First-order optimality conditions}

Let $\Omega^*$ be a minimizer of $J_{p,g}$ over $\mathcal{A}_C(\M)$. Define the free boundary $\Gamma = \partial\Omega^* \setminus \partial C$ and the contact set $\Gamma_0 = \partial\Omega^* \cap \partial C$.

On $\Gamma$, all deformations are admissible. Since $\Omega^*$ minimizes $J_{p,g}$, we have $dJ_{p,g}(\Omega^*)[X] = 0$ for every admissible $X$ supported on $\Gamma$. Since $p>1$ and the normal velocity $g(X,\nu)$ is arbitrary on $\Gamma$, we obtain
\begin{equation}\label{eq:freebcp}
|\nabla_g u_{\Omega^*}|_g^p = k^p \quad \text{on } \Gamma,
\end{equation}
hence $|\nabla_g u_{\Omega^*}|_g = k$ on $\Gamma$.

On the contact set $\Gamma_0$, admissible deformations must satisfy $g(X,\nu) \geq 0$ to preserve the constraint $C \subset \Omega^*$. Therefore, the optimality condition yields
\begin{equation}\label{eq:contactp}
|\nabla_g u_{\Omega^*}|_g^p \leq k^p \quad \text{on } \Gamma_0,
\end{equation}
hence $|\nabla_g u_{\Omega^*}|_g \leq k$ on $\Gamma_0$.

\begin{theorem}[First-order optimality system]\label{thm:optimalityp}
Let $\Omega^*$ be a minimizer of $J_{p,g}$ over $\mathcal{A}_C(\M)$. Then
\[
\begin{cases}
-\Delta_{p,g} u_{\Omega^*} = f & \text{in } \Omega^*,\\
u_{\Omega^*} = 0 & \text{on } \partial\Omega^*,\\
|\nabla_g u_{\Omega^*}|_g = k & \text{on } \partial\Omega^* \setminus \partial C,\\
|\nabla_g u_{\Omega^*}|_g \leq k & \text{on } \partial\Omega^* \cap \partial C.
\end{cases}
\]
\end{theorem}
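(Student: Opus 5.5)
The first two lines of the system hold by definition: $u_{\Omega^*}$ is the unique weak solution of the Dirichlet problem on $\Omega^*$, which exists by strict monotonicity of the $p$-Laplacian. By (H4) it lies in $C^{1,\alpha}(\overline{\Omega^*})$, so the boundary traces of $|\nabla_g u_{\Omega^*}|_g$ make sense pointwise. The substance is the two boundary conditions. Both come from the shape derivative formula \eqref{eq:shapederivp}, which follows from Theorem \ref{thm:shapederivp} together with the volume variation formula.

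\textbf{Free boundary $\Gamma = \partial\Omega^*\setminus\partial C$.} Fix $x_0\in\Gamma$. Since $\partial C$ is closed, some geodesic ball $B$ around $x_0$ does not meet $C$. Take a $C^2$ vector field $X$ supported in $B$ with normal component $g(X,\nu)=\pm\psi$, where $\psi\ge0$ is an arbitrary smooth bump on $\partial\Omega^*\cap B$. For small $|t|$ the flow $\Phi_t$ fixes a neighborhood of $C$. It is a $C^{2,\alpha}$-small perturbation, so $\Omega_t=\Phi_t(\Omega^*)$ still satisfies the $RC$--GNP bounds. One may need to enlarge the constants slightly; the cleanest route is to assume $\Omega^*$ satisfies the bounds with strict inequality, or to shrink $t$, and I would state this explicitly. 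Hence $t\mapsto J_{p,g}(\Omega_t)$ has a minimum at $t=0$ on both sides, and so $dJ_{p,g}(\Omega^*)[X]=0$. By \eqref{eq:shapederivp} this gives
\[
\int_{\partial\Omega^*}\bigl(k^p-|\nabla_g u_{\Omega^*}|_g^p\bigr)\psi\,\ds=0
\]
for all such $\psi$. The integrand is continuous, so the fundamental lemma of the calculus of variations gives $|\nabla_g u_{\Omega^*}|_g^p=k^p$ on $\Gamma$. Since $p>1$ and $|\nabla_g u_{\Omega^*}|_g\ge0$, taking $p$-th roots yields $|\nabla_g u_{\Omega^*}|_g=k$.

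\textbf{Contact set $\Gamma_0=\partial\Omega^*\cap\partial C$.} Here only outward deformations keep $C\subset\Omega_t$ for $t\ge0$, so I take $X$ with $g(X,\nu)=\psi\ge0$ near $x_0\in\Gamma_0$. The paper requires $X$ to vanish near $C$, which conflicts with deforming at a contact point. I would relax this to vector fields pointing outward along $\partial C$: with $g(X,\nu)\ge0$ on $\partial\Omega^*$, the one-sided flow $\Phi_t$, $t\ge0$, keeps $C\subset\Omega_t$. Theorem \ref{thm:shapederivp} should still apply, since its proof needs only $X$ supported away from $K=\operatorname{supp} f$; if $K$ touches $\partial C$, one replaces this by a limiting argument. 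One-sided minimality then gives $dJ_{p,g}(\Omega^*)[X]\ge0$, that is,
\[
\int\bigl(k^p-|\nabla_g u_{\Omega^*}|_g^p\bigr)\psi\,\ds\ge0
\]
for every $\psi\ge0$ localized near $x_0$. Continuity then gives $|\nabla_g u_{\Omega^*}|_g\le k$ at $x_0$.

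\textbf{Main obstacle.} The delicate point is the admissibility of the perturbed domains. The domains $\Omega_t$ must remain in $\mathcal{A}_C(\M)$: the $C^{2,\alpha}$ bound $A$ on the defining function, the nondegeneracy $c_0$, and the normal exponential condition must all survive. This is what makes $t=0$ an interior (respectively one-sided) minimum. I would handle it by composing the defining function, setting $h_t=h_{\Omega^*}\circ\Phi_t^{-1}$, and noting that $h_t\to h_{\Omega^*}$ in $C^{2,\alpha}$. The same convergence justifies invoking Theorem \ref{thm:stabilityp} and the differentiability in Theorem \ref{thm:shapederivp}. If $\Omega^*$ saturates the constant $A$, the variation is only admissible in a restricted cone. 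In that case I would note that the conclusion is a Lagrange-type condition, and that the statement implicitly assumes the constraint is inactive on $\Gamma$.
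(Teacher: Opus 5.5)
Your argument is essentially the paper's own: two-sided normal variations supported away from $\overline{C}$, inserted into \eqref{eq:shapederivp}, give $|\nabla_g u_{\Omega^*}|_g=k$ on $\Gamma$, and one-sided outward variations give $|\nabla_g u_{\Omega^*}|_g\le k$ on $\Gamma_0$. The admissibility caveats you flag are genuine and are simply asserted away in the paper: possible saturation of the $RC$--GNP bounds, and the conflict between outward deformations at contact points and the requirement in Theorem~\ref{thm:shapederivp} that $X$ vanish near $C$.

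One caution applies equally to both arguments. The coefficient in Theorem~\ref{thm:shapederivp} should be $\frac{1}{p}$, not $\frac{p-1}{p}$. On concentric geodesic balls around a radial source one checks directly that $\frac{d}{dr}\int_{B_r}|\nabla_g u_{B_r}|_g^p\,\dv=|\partial B_r|_g\,|\nabla_g u_{B_r}|_g^p$, with the gradient evaluated on $\partial B_r$. So for $p\neq 2$ the boundary conditions produced by $J_{p,g}$ as defined are really $|\nabla_g u_{\Omega^*}|_g=(p-1)^{1/p}k$ on $\Gamma$ and $|\nabla_g u_{\Omega^*}|_g\le(p-1)^{1/p}k$ on $\Gamma_0$. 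To obtain $k$ one should use the volume coefficient $k^p/p$ in $J_{p,g}$.
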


\begin{remark}\label{rem:contactunusedp}
The contact condition \eqref{eq:contactp} is stated for completeness, but it is not used in the proofs of Theorems \ref{thm:suffp} and \ref{thm:neccondp}. The main results of this paper rely only on the free boundary condition \eqref{eq:freebcp} and on the sign of the shape derivative at $C$.
\end{remark}

\section{Sufficiency of $(NS)_{p,g}$}\label{sec:suffp}

We now prove that $(NS)_{p,g}$ is sufficient for the existence of a solution strictly containing $C$.

\begin{theorem}[Sufficiency]\label{thm:suffp}
Assume hypotheses (H1)--(H4) and the $RC$--GNP condition. If
\[
(NS)_{p,g} \qquad \int_C f \dv > k^{p-1} |\partial C|_g,
\]
then the $p$-Laplacian quadrature surface problem $QS_p(f,k)$ admits a solution $\Omega^* \in \mathcal{A}_C(\M)$ strictly containing $C$.
\end{theorem}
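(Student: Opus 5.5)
The plan is to take the minimizer $\Omega^*$ of $J_{p,g}$ over $\mathcal{A}_C(\M)$ given by Theorem \ref{thm:existencep} and show that it satisfies the overdetermined system and strictly contains $C$. By Theorem \ref{thm:optimalityp}, $u_{\Omega^*}$ solves $-\Delta_{p,g}u=f$ in $\Omega^*$ with $u=0$ on $\partial\Omega^*$ and $|\nabla_g u|_g=k$ on the free part $\Gamma=\partial\Omega^*\setminus\partial C$. So it suffices to show that the contact set $\Gamma_0=\partial\Omega^*\cap\partial C$ is empty. Then $\partial\Omega^*=\Gamma$, and $\Omega^*$ solves $QS_p(f,k)$ with $C\subsetneq\Omega^*$. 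The interior statement $K\subset C\subset\Omega^*$ is automatic from the admissible class.

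The first step is the flux identity on $C$. Testing the equation for $u_C$ against the constant $1$, or equivalently applying the divergence theorem (justified by (H2), (H4) and $u_C\in C^{1,\alpha}(\overline C)$), gives
\[
\int_C f\,\dv=\int_{\partial C}|\nabla_g u_C|_g^{p-1}\,\ds ,
\]
since $u_C>0$ in $C$ and $u_C=0$ on $\partial C$ force $-\partial_\nu u_C=|\nabla_g u_C|_g$. Combined with $(NS)_{p,g}$, this yields
\[
\int_{\partial C}|\nabla_g u_C|_g^{p-1}\ds>k^{p-1}|\partial C|_g ,
\]
so $|\nabla_g u_C|_g>k$ on a subset of $\partial C$ of positive measure. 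By (H2) and the Hopf lemma, $|\nabla_g u_C|_g>0$ on $\partial C$, so the $p$-Laplacian is nondegenerate there.

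The second step shows $\Omega^*\neq C$. Since $C$ is itself admissible, suppose $\Omega^*=C$. Choose a vector field $X$ with $g(X,\nu)\ge0$ on $\partial C$, supported near the set where $|\nabla_g u_C|_g>k$, and vanishing on a neighborhood of $K$; this is possible if $K$ stays away from $\partial C$, and otherwise one uses outward deformations, which keep $C\subset\Omega_t$. By \eqref{eq:shapederivp},
\[
dJ_{p,g}(C)[X]=\frac{p-1}{p}\int_{\partial C}\bigl(k^p-|\nabla_g u_C|_g^p\bigr)g(X,\nu)\,\ds<0 ,
\]
which contradicts minimality. Equivalently, the contact condition \eqref{eq:contactp} fails at $C$. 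Hence $\Omega^*\supsetneq C$.

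The third step, which I expect to be the main obstacle, is to exclude a partial contact set $\Gamma_0\neq\emptyset$. At a point $x_0\in\Gamma_0$, Proposition \ref{prop:comparisonp} gives $u_{\Omega^*}\ge u_C$ in $C$, with $u_{\Omega^*}>u_C$ in $C$ since $C\subsetneq\Omega^*$, and both vanish at $x_0$. By Lemma \ref{lem:hopfp}, using the interior ball condition from (H2) and $|\nabla_g u_{\Omega^*}(x_0)|_g\ge|\nabla_g u_C(x_0)|_g>0$, we get $|\nabla_g u_{\Omega^*}(x_0)|_g>|\nabla_g u_C(x_0)|_g$. This alone does not contradict $|\nabla_g u_{\Omega^*}|_g\le k$. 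I would therefore argue on the flux instead. Integrating the equation on $\Omega^*$ gives
\[
\int_C f\,\dv=\int_{\Gamma}k^{p-1}\ds+\int_{\Gamma_0}|\nabla_g u_{\Omega^*}|_g^{p-1}\ds\le k^{p-1}|\partial\Omega^*|_g .
\]
I would then try to show that this is incompatible with $(NS)_{p,g}$ whenever $\Gamma_0\ne\emptyset$, using the strict convexity of $C$ and a local outward perturbation near $\Gamma_0$ that strictly decreases $J_{p,g}$. If that fails, I would fall back on the interpretation, consistent with Remark \ref{rem:contactunusedp}, that "strictly containing" means $\Omega^*\ne C$, with the free boundary condition holding on $\Gamma$. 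The points that need care are the justification of the boundary flux identity for $u_{\Omega^*}$ under (H4), and the admissibility of the perturbations within the $RC$--GNP class with the constants $A$, $c_0$, $\rho_0$ fixed. The latter is a genuine issue, since a minimizer could sit on the boundary of the constraint set of the class; I would address it by assuming the constants are chosen large enough that $\Omega^*$ is interior to the class.
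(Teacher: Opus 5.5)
Your Steps 1--2 match the paper's argument. The paper also takes the minimizer from Theorem~\ref{thm:existencep}, assumes $\Omega^*=C$, and combines the flux identity with H\"older to get $\int_{\partial C}|\nabla_g u_C|_g^p\,\ds>k^p|\partial C|_g$, hence $dJ_{p,g}(C)[\nu]<0$. Your localized field supported where $|\nabla_g u_C|_g>k$ is an equally valid variant that avoids H\"older; equivalently, \eqref{eq:contactp} fails when all of $\partial C$ is contact set.

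The gap is Step~3, which you leave open. Nothing in the proposal shows $\Gamma_0=\partial\Omega^*\cap\partial C=\emptyset$. Your fallback does not give a solution of $QS_p(f,k)$, since on $\Gamma_0$ you only know $|\nabla_g u_{\Omega^*}|_g\le k$. Neither tool you mention can close it:
\begin{itemize}
\item The flux identity gives only $\int_C f\,\dv\le k^{p-1}|\partial\Omega^*|_g$. This is compatible with $(NS)_{p,g}$ as soon as $|\partial\Omega^*|_g>|\partial C|_g$.
\item The Hopf comparison gives $|\nabla_g u_{\Omega^*}|_g>|\nabla_g u_C|_g$ on $\Gamma_0$. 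This contradicts \eqref{eq:contactp} only where $|\nabla_g u_C|_g\ge k$, while $(NS)_{p,g}$ controls $|\nabla_g u_C|_g$ only on average.
\end{itemize}
The paper's proof does not supply this step either. It passes from $C\subsetneq\Omega^*$ to ``the free boundary coincides with $\partial\Omega^*$'', which is exactly the inference you rightly flagged as unjustified.

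This step is not a technicality: under $(NS)$ alone it can fail. Consider the flat model $\mathbb{R}^2$ with $p=2$:
\begin{itemize}
\item Let $C$ be a thin ellipse filling $0\le x_1\le 2L$, so $|\partial C|\approx 4L$.
\item Let $f$ be a tiny constant on $C$ plus a concentrated mass near $x_1=c<L/2$.
\item Take $\int f=k\ell$ with $\ell$ slightly above $|\partial C|$, so \eqref{eq:NSeucl} holds.
\end{itemize}
Suppose $\Omega\supset C$ were a bounded connected solution. Testing $-\Delta u=f$ against the harmonic function $x_1$ gives $k\int_{\partial\Omega}x_1\,d\sigma=\int x_1 f\,dx\approx ck\ell$. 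On the other hand, $|\partial\Omega|=\ell$, and $\partial\Omega$ covers every abscissa between its extreme abscissae at least twice, with those extremes containing $[0,2L]$. This forces $\int_{\partial\Omega}x_1\,d\sigma\ge \ell(8L-\ell)/4\approx L\ell$, a contradiction. So there no solution contains $C$, and a minimizer $\Omega^*\neq C$ must either touch $\partial C$ or sit on the constraints of the class.

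Closing Step~3 therefore needs a stronger hypothesis. For instance, if $|\nabla_g u_C|_g\ge k$ on all of $\partial C$, your Hopf argument together with \eqref{eq:contactp} does give $\Gamma_0=\emptyset$.

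Likewise, choosing $A,c_0,\rho_0$ large enough that $\Omega^*$ is interior to the class is an extra assumption. The paper makes it tacitly when deriving \eqref{eq:freebcp}. It is not harmless on a closed manifold: for $p\le N$ and $q\notin C$, $J_{p,g}(\M\setminus\overline{B_g(q,s)})\to-\infty$ as $s\to 0$. So enlarging the constants does not by itself keep minimizers away from the constraint.
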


\begin{proof}
Assume $(NS)_{p,g}$. Let $\Omega^*$ be a minimizer of $J_{p,g}$ over $\mathcal{A}_C(\M)$, whose existence is guaranteed by Theorem \ref{thm:existencep}. We argue by contradiction and suppose that $\Omega^*$ does not strictly contain $C$. Since the $RC$--GNP condition (Definition \ref{def:rcgnpp}, item 1) imposes $C \subset \Omega^*$, the negation of ``strictly containing'' means exactly $\Omega^* = C$.

Since $-\Delta_{p,g} u_C = f$ in $C$ and $u_C = 0$ on $\partial C$, the flux identity gives
\[
\int_C f \dv = \int_{\partial C} |\nabla_g u_C|_g^{p-1} \ds.
\]
By H\"older's inequality,
\[
\int_{\partial C} |\nabla_g u_C|_g^{p-1} \ds \leq |\partial C|_g^{1/p} \left(\int_{\partial C} |\nabla_g u_C|_g^p \ds\right)^{(p-1)/p},
\]
so
\[
\left(\int_{\partial C} |\nabla_g u_C|_g^{p-1} \ds\right)^p \leq |\partial C|_g \left(\int_{\partial C} |\nabla_g u_C|_g^p \ds\right)^{p-1}.
\]
Thus $(NS)_{p,g}$ implies
\[
\int_{\partial C} |\nabla_g u_C|_g^p \ds > k^p |\partial C|_g.
\]
Consider now the shape derivative of $J_{p,g}$ at $C$ in the direction of the outward normal $\nu$. By \eqref{eq:shapederivp},
\[
dJ_{p,g}(C)[\nu] = \frac{p-1}{p} \int_{\partial C} \left(k^p - |\nabla_g u_C|_g^p\right) \ds < 0.
\]
This means that $C$ is not a local minimum of $J_{p,g}$ over $\mathcal{A}_C(\M)$. Hence $\Omega^* \neq C$, a contradiction. Therefore $\Omega^*$ strictly contains $C$.

From the free boundary condition \eqref{eq:freebcp}, we have $|\nabla_g u_{\Omega^*}|_g = k$ on $\partial\Omega^* \setminus \partial C$. Since $C \subsetneq \Omega^*$, the free boundary coincides with $\partial\Omega^*$, and $(\Omega^*, u_{\Omega^*})$ is a solution of $QS_p(f,k)$.
\end{proof}

\begin{remark}
The proof only uses H\"older's inequality and the sign of the shape derivative at $C$. It does not require any comparison of normal derivatives on $\partial C$, thereby avoiding the pitfalls of earlier approaches.
\end{remark}

\section{Counterexample: failure of necessity}\label{sec:counterp}

We now show that $(NS)_{p,g}$ is not necessary in general, even under nonnegative curvature.

\subsection{Setup on the round sphere}

Let $(S^n_R, g_R)$ be the round sphere of radius $R>0$. Fix a point $p \in S^n_R$. For $0<r<\pi R/2$, the geodesic ball $B_g(p,r)$ is totally convex. In geodesic polar coordinates centered at $p$,
\[
g_R = dr^2 + R^2 \sin^2(r/R) g_{\mathbb{S}^{n-1}}.
\]
The perimeter of a geodesic ball of radius $r$ is
\begin{equation}\label{eq:perballp}
|\partial B_g(p,r)|_g = n \omega_n R^{n-1} \sin^{n-1}(r/R),
\end{equation}
where $\omega_n$ denotes the volume of the unit ball in $\mathbb{R}^n$.

We choose a radius $r_0$ with $0<r_0<\pi R/2$ and set
\[
C = B_g(p,r_0).
\]
Since $r_0<\pi R/2$, $C$ is totally convex. Let $f$ be a radially symmetric nonnegative function supported in $B_g(p,r_0)$, with $f \not\equiv 0$, so that $\operatorname{tconv}(\operatorname{supp} f) = C$.

\subsection{The counterexample}

Choose $r_\Omega$ with $\pi R/2 < r_\Omega < \pi R$ and such that
\begin{equation}\label{eq:perineqp}
\sin(r_\Omega/R) < \sin(r_0/R),
\end{equation}
which is possible since $\sin$ decreases on $(\pi R/2, \pi R)$ and $r_0$ is small. Let
\[
\Omega = B_g(p, r_\Omega).
\]
Then $C \subsetneq \Omega$ and, by \eqref{eq:perballp} and \eqref{eq:perineqp},
\[
|\partial\Omega|_g = n\omega_n R^{n-1}\sin^{n-1}(r_\Omega/R) < n\omega_n R^{n-1}\sin^{n-1}(r_0/R) = |\partial C|_g.
\]

Let $F:[0,r_0] \to [0,\infty)$ be a continuous radial profile with $F \not\equiv 0$, and set $f(x) = F(r(x))$ where $r(x) = d_g(p,x)$. Define
\begin{equation}\label{eq:kdefp}
k = \left(\frac{\int_0^{r_0} (R\sin(s/R))^{n-1} F(s)\,ds}{(R\sin(r_\Omega/R))^{n-1}}\right)^{1/(p-1)} > 0.
\end{equation}
Let $u_\Omega$ be the radial solution of $-\Delta_{p,g} u_\Omega = f$ in $\Omega$, $u_\Omega = 0$ on $\partial\Omega$. Explicitly, $u_\Omega(x) = U(r(x))$ where $U$ satisfies
\[
-\frac{1}{J_R(r)}\frac{d}{dr}\left(J_R(r)|U'(r)|^{p-2}U'(r)\right) = F(r), \qquad J_R(r) = R^{n-1}\sin^{n-1}(r/R),
\]
with $U(r_\Omega) = 0$ and $U'(0) = 0$. For $r>r_0$, $F(r)=0$, so $J_R(r)|U'(r)|^{p-2}U'(r) = C_0$ is constant. Since $U>0$ in $\Omega$ and $U(r_\Omega)=0$, we have $U'<0$ near $r_\Omega$, so
\[
-J_R(r)|U'(r)|^{p-1} = C_0, \qquad |U'(r)| = \left(\frac{-C_0}{J_R(r)}\right)^{1/(p-1)}.
\]
The constant is determined by the total mass:
\[
\int_C f \dv = \int_0^{r_0} J_R(r) F(r)\,dr = |U'(r_\Omega)|^{p-1} J_R(r_\Omega)|\mathbb{S}^{n-1}|.
\]
Therefore,
\[
|\nabla_g u_\Omega|_g = |U'(r_\Omega)| = \left(\frac{\int_0^{r_0} (R\sin(s/R))^{n-1} F(s)\,ds}{(R\sin(r_\Omega/R))^{n-1}}\right)^{1/(p-1)} = k
\]
on $\partial\Omega$, by the choice of $k$ in \eqref{eq:kdefp}. Thus $(\Omega, u_\Omega)$ is a solution of $QS_p(f,k)$ with $\Omega$ strictly containing $C$.

On the other hand, integrating $-\Delta_{p,g} u_\Omega = f$ over $\Omega$ and using the boundary condition,
\[
\int_C f \dv = \int_\Omega f \dv = k^{p-1}|\partial\Omega|_g < k^{p-1}|\partial C|_g,
\]
since $|\partial\Omega|_g < |\partial C|_g$. Hence $(NS)_{p,g}$ is violated.

\begin{example}[Explicit constant source in dimension $n=2$, $p=2$]\label{ex:constp2}
Take $n=2$, $p=2$ and $F(r) = f_0 > 0$ on $[0,r_0]$. Then
\[
\int_0^{r_0} R\sin(s/R)\,ds = R^2(1-\cos(r_0/R)),
\]
so
\[
k = \frac{f_0 R(1-\cos(r_0/R))}{\sin(r_\Omega/R)}.
\]
The area of $C = B_g(p,r_0)$ is
\[
|C|_g = 2\pi R^2(1-\cos(r_0/R)),
\]
and the perimeter of $\partial C$ is $|\partial C|_g = 2\pi R\sin(r_0/R)$. For concreteness, take $R=1$, $r_0=0.5$, $r_\Omega = \pi - 0.4$, $f_0=1$. Then
\[
k = \frac{1-\cos 0.5}{\sin 0.4} \approx 0.3144.
\]
Then
\[
\int_C f \dv = |C|_g \approx 0.7692,
\]
while
\[
k|\partial C|_g \approx 0.3144 \cdot 2\pi\sin 0.5 \approx 0.9470.
\]
Thus $\int_C f \dv < k|\partial C|_g$, confirming the failure of $(NS)_{2,g}$, while the problem $QS_2(f,k)$ admits the solution $\Omega = B_g(p,\pi-0.4)$.
\end{example}

\begin{example}[Explicit constant source in dimension $n=2$, $p=3$]\label{ex:constp3}
Take $n=2$, $p=3$ and $F(r) = f_0 = 1$ on $[0,r_0]$, with the same geometric data as above. Then
\[
k = \left(\frac{1-\cos 0.5}{\sin 0.4}\right)^{1/2} \approx \sqrt{0.3144} \approx 0.5607.
\]
We have
\[
\int_C f \dv = |C|_g \approx 0.7692,
\]
while
\[
k^{p-1}|\partial C|_g = k^2 |\partial C|_g \approx 0.3144 \times 3.012 \approx 0.9470.
\]
Thus $\int_C f \dv < k^{2}|\partial C|_g$, confirming the failure of $(NS)_{3,g}$ for $p=3$, while the problem $QS_3(f,k)$ admits the solution $\Omega = B_g(p,\pi-0.4)$.
\end{example}

\begin{remark}
The construction works for every $n \geq 2$, every $R>0$ and every $1<p<\infty$, provided $r_0$ is chosen sufficiently small with $r_0<\pi R/2$ (so that $C = B_g(p,r_0)$ is totally convex) and $r_\Omega \in (\pi R/2,\pi R)$ is chosen so that \eqref{eq:perineqp} holds, i.e.\ $\sin(r_\Omega/R) < \sin(r_0/R)$. The same phenomenon occurs for any positively curved manifold admitting a geodesic ball whose perimeter is not monotone with respect to the radius.
\end{remark}

\section{Conditional necessity}\label{sec:neccondp}

The counterexample shows that $(NS)_{p,g}$ is not necessary in general. We now identify a geometric hypothesis under which necessity is restored.

\begin{definition}\label{def:perimonp}
Let $U \subset \M$ be a domain. We say that the perimeter functional is \emph{monotone with respect to inclusion on the admissible class} $\mathcal{A}_C(\M) \cap \{\Omega \subset U\}$ if for any two domains $A, B \in \mathcal{A}_C(\M)$ with $A \subset U$, $B \subset U$ and $A \subsetneq B$, we have $|\partial A|_g < |\partial B|_g$.
\end{definition}

\begin{remark}\label{rem:perimonp}
Definition \ref{def:perimonp} is stated as an \emph{additional hypothesis}, not as a consequence of nonnegative curvature. It is satisfied, for instance, when $U$ is a normal convex neighborhood of $C$ contained in a ball of radius less than the injectivity radius and all admissible domains in $U$ are geodesic balls centered at the same point $p$: in that case, the function $r \mapsto |\partial B_g(p,r)|_g$ is increasing on $[0,R_U]$ for $R_U$ below the injectivity radius, so the monotonicity holds on that restricted class. The general case of arbitrary nested convex domains in a normal convex neighborhood requires a separate comparison argument for the perimeter and is not established here; we therefore treat Definition \ref{def:perimonp} as a hypothesis.
\end{remark}

\begin{theorem}[Conditional necessity]\label{thm:neccondp}
Assume hypotheses (H1)--(H3). Suppose that there exists a normal convex neighborhood $U$ of $C$ such that the perimeter functional is monotone with respect to inclusion on the admissible class, in the sense of Definition \ref{def:perimonp}. If $\Omega \in \mathcal{A}_C(\M)$ is a solution of $QS_p(f,k)$ with $\Omega \subset U$ and $C \subsetneq \Omega$, then
\[
(NS)_{p,g} \qquad \int_C f \dv > k^{p-1}|\partial C|_g
\]
holds.
\end{theorem}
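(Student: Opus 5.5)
The plan is to integrate the equation over $\Omega$ and then compare perimeters. Because $\Omega \in \mathcal{A}_C(\M)$ has a $C^{2,\alpha}$ boundary, hypothesis (H4) (or Remark \ref{rem:regp}) makes $u_\Omega \in C^{1,\alpha}(\overline{\Omega})$. This lets us apply the divergence theorem to $-\Delta_{p,g} u_\Omega = f$ on $\Omega$.

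The first step computes the boundary flux. Since $u_\Omega = 0$ on $\partial\Omega$ and $u_\Omega>0$ in $\Omega$ (strong maximum principle, as in the discussion preceding Proposition \ref{prop:comparisonp}), the gradient on $\partial\Omega$ points along $-\nu$. Hence $-|\nabla_g u_\Omega|_g^{p-2}\partial u_\Omega/\partial\nu_g = |\nabla_g u_\Omega|_g^{p-1} = k^{p-1}$ on $\partial\Omega$. As $\operatorname{supp} f = K \subset C$, we obtain
\[
\int_C f \dv = \int_\Omega f \dv = \int_{\partial\Omega} |\nabla_g u_\Omega|_g^{p-1} \ds = k^{p-1}|\partial\Omega|_g .
\]
This is exactly the identity used in the spherical counterexample, now with no symmetry assumed.

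The second step uses the monotonicity hypothesis. We need $C$ itself in the admissible class inside $U$. By (H2), $C$ is a $C^{2,\alpha}$ strictly convex domain, so $C \in \mathcal{A}_C(\M)$ (take as defining function a regularized signed distance to $\partial C$). Also $C \subset \Omega \subset U$. Since $C \subsetneq \Omega$, Definition \ref{def:perimonp} applied with $A=C$ and $B=\Omega$ gives $|\partial C|_g < |\partial\Omega|_g$. Combining this with the identity above and $k>0$ yields
\[
\int_C f \dv = k^{p-1}|\partial\Omega|_g > k^{p-1}|\partial C|_g ,
\]
which is $(NS)_{p,g}$.

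I expect the only delicate point to be the membership $C \in \mathcal{A}_C(\M)$. The uniform constants $A$, $c_0$, $\rho_0$ in Definition \ref{def:rcgnpp} must also accommodate $C$, which I would arrange by choosing them so that $C$'s defining function qualifies. This is harmless since the constants are fixed once for the class. If that proved awkward, I would instead read Definition \ref{def:perimonp} as applying to the pair $(C,\Omega)$ directly; the remaining steps are routine.
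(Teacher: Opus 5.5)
Your proposal is correct and is essentially the paper's proof: integrate $-\Delta_{p,g}u_\Omega=f$ over $\Omega$ to get $\int_C f\dv = k^{p-1}|\partial\Omega|_g$, then apply the perimeter monotonicity of Definition~\ref{def:perimonp} with $A=C$, $B=\Omega$. The two points you make explicit are assumed without comment in the paper: the sign of the normal derivative on $\partial\Omega$, and the membership $C\in\mathcal{A}_C(\M)$ (which the paper also relies on in Theorem~\ref{thm:suffp}).
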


\begin{proof}
Let $\Omega$ be a solution of $QS_p(f,k)$ with $C \subsetneq \Omega \subset U$. Integrating $-\Delta_{p,g} u_\Omega = f$ over $\Omega$ and using the boundary condition $|\nabla_g u_\Omega|_g = k$ on $\partial\Omega$, we obtain
\[
\int_\Omega f \dv = k^{p-1}|\partial\Omega|_g.
\]
Since $\operatorname{supp} f \subset C \subset \Omega$, we have $\int_\Omega f \dv = \int_C f \dv$. Thus
\[
\int_C f \dv = k^{p-1}|\partial\Omega|_g.
\]
By the monotonicity of the perimeter on the admissible class (Definition \ref{def:perimonp}) and since $C, \Omega \in \mathcal{A}_C(\M)$ with $C \subsetneq \Omega \subset U$, we have $|\partial\Omega|_g > |\partial C|_g$. Therefore,
\[
\int_C f \dv = k^{p-1}|\partial\Omega|_g > k^{p-1}|\partial C|_g,
\]
which is $(NS)_{p,g}$.
\end{proof}

\section{The critical equality case}\label{sec:equalityp}

We now consider the critical case
\[
\int_C f \dv = k^{p-1}|\partial C|_g.
\]

\begin{proposition}\label{prop:equalityp}
Assume hypotheses (H1)--(H4) and the $RC$--GNP condition. If
\[
\int_C f \dv = k^{p-1}|\partial C|_g,
\]
then the shape derivative of $J_{p,g}$ at $C$ in the direction of the outward normal vanishes:
\[
dJ_{p,g}(C)[\nu] = 0.
\]
In particular, $C$ is a critical point of $J_{p,g}$ over $\mathcal{A}_C(\M)$.
\end{proposition}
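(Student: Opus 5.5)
The plan is to compute $dJ_{p,g}(C)[\nu]$ directly from \eqref{eq:shapederivp} and to use the equality hypothesis through the flux identity, the same two ingredients used in the proof of Theorem \ref{thm:suffp}. Taking $\Omega=C$ and $X$ with $g(X,\nu)=1$ on $\partial C$ in \eqref{eq:shapederivp} gives
\[
dJ_{p,g}(C)[\nu]=\frac{p-1}{p}\Bigl(k^p|\partial C|_g-\int_{\partial C}|\nabla_g u_C|_g^p\ds\Bigr),
\]
so the statement is equivalent to $\int_{\partial C}|\nabla_g u_C|_g^p\ds=k^p|\partial C|_g$. This requires $X$ normal to $\partial C$, not merely vanishing near $C$, which Section \ref{sec:shape-derivp} assumes; I would read the formula as the one-sided outward derivative, as in the proof of Theorem \ref{thm:suffp}.

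First, I integrate $-\Delta_{p,g}u_C=f$ over $C$. Since $u_C=0$ on $\partial C$ and $u_C>0$ inside, the outward normal derivative equals $-|\nabla_g u_C|_g$, and this gives $\int_C f\dv=\int_{\partial C}|\nabla_g u_C|_g^{p-1}\ds$. The hypothesis then reads
\[
\int_{\partial C}|\nabla_g u_C|_g^{p-1}\ds=k^{p-1}|\partial C|_g,
\]
so the mean of $|\nabla_g u_C|_g^{p-1}$ over $\partial C$ is exactly $k^{p-1}$.

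Second, I must upgrade this $(p-1)$-moment identity to the $p$-moment identity. Hölder's inequality, exactly as in Theorem \ref{thm:suffp}, gives
\[
\int_{\partial C}|\nabla_g u_C|_g^p\ds\ \ge\ k^p|\partial C|_g,
\]
with equality if and only if $|\nabla_g u_C|_g$ is constant ($=k$) on $\partial C$. So the whole proof reduces to showing that $|\nabla_g u_C|_g\equiv k$ on $\partial C$, i.e.\ that $C$ itself solves the overdetermined problem \eqref{eq:overdetp}.

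This last step is the main obstacle, and I do not see how to obtain it from (H1)--(H4) alone. I would first try to show that the average and the $L^p$ mean of $|\nabla_g u_C|_g$ coincide, using Hopf (Lemma \ref{lem:hopfp}), which gives $|\nabla_g u_C|_g>0$ on $\partial C$. I would then combine this with the strict convexity in (H2) to compare $u_C$ with radial barriers and force constancy. If that fails, I would fall back on the symmetric situation of Section \ref{sec:counterp}, where $C$ is a geodesic ball, $f$ is radial, and $|\nabla_g u_C|_g$ is constant on $\partial C$. In that case Hölder is an equality and the computation above gives $dJ_{p,g}(C)[\nu]=0$ at once.

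Without such constancy the argument only yields $dJ_{p,g}(C)[\nu]\le 0$, and that gap is where I expect the real difficulty to lie. I expect it can be closed only by an additional hypothesis (for instance, that $C$ is a geodesic ball and $f$ is radial about its centre), not by a general argument.
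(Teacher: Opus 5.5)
Your reduction is the same as the paper's (flux identity plus H\"older). The step you could not complete is a genuine gap, but it cannot be filled, because the proposition is false as stated.

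The equality hypothesis only fixes the $(p-1)$-moment $\int_{\partial C}|\nabla_g u_C|_g^{p-1}\ds=k^{p-1}|\partial C|_g$. H\"older then gives only $\int_{\partial C}|\nabla_g u_C|_g^{p}\ds\ge k^{p}|\partial C|_g$, i.e.\ $dJ_{p,g}(C)[\nu]\le 0$. Equality holds if and only if $|\nabla_g u_C|_g\equiv k$ on $\partial C$, that is, iff $C$ itself solves $QS_p(f,k)$.

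The paper's proof passes this point by quoting the equality case of H\"older (``with equality if and only if $|\nabla_g u_C|_g$ is constant on $\partial C$, equal to $k$''). It then simply asserts $\int_{\partial C}|\nabla_g u_C|_g^{p}\ds=k^{p}|\partial C|_g$. Nothing in (H1)--(H4) forces equality in H\"older, so that step assumes its conclusion. Your proposed repairs cannot succeed either: Hopf's lemma only gives $|\nabla_g u_C|_g>0$, and no convexity or curvature assumption makes the boundary flux constant.

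Here is a counterexample within the paper's hypotheses. Let $p=2$, $\M=S^2_R$, $C=B_g(q,r_0)$ with $r_0<\pi R/2$. In geodesic polar coordinates $(r,\theta)$ about $q$, let $h=\tan\bigl(r/(2R)\bigr)\cos\theta$. Then:
\begin{itemize}
\item[(a)] $h$ is harmonic on $\overline C$, since it is a linear coordinate pulled back by stereographic projection from $-q$, and harmonicity is conformally invariant in dimension two;
\item[(b)] $|h|<1$ on $\overline C$;
\item[(c)] $\int_C h\dv=\int_{\partial C}h\ds=0$.
\end{itemize}
Put $f=(1+\varepsilon h)\chi_C$ with $0<\varepsilon<1$, so $f>0$ on $C$ and (H1)--(H4) hold. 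Set $k=\int_C f\dv/|\partial C|_g$, so the equality hypothesis holds. Green's formula gives
\[
\int_{\partial C}|\nabla_g u_C|_g\,h\ds=\int_C fh\dv=\varepsilon\int_C h^2\dv>0,
\]
whereas a constant $|\nabla_g u_C|_g$ would make the left side vanish. So Cauchy--Schwarz is strict and $dJ_{2,g}(C)[\nu]<0$.

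For general $p$, one can take $f=\chi_C$ with $C$ a small non-round ellipsoid in a flat torus. By Serrin's theorem and its $p$-Laplacian version (Garofalo--Lewis), $|\nabla_g u_C|_g$ cannot be constant on $\partial C$, so the same failure occurs.

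The ``in particular'' clause fails too. Criticality with respect to all admissible normal velocities would require $|\nabla_g u_C|_g=k$ pointwise on $\partial C$, not merely on average.

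The correct statement is $dJ_{p,g}(C)[\nu]\le 0$, with equality exactly in the overdetermined case (e.g.\ radial $f$ on a geodesic ball). Your closing assessment is right, but you should settle the question with a counterexample rather than leave it as an open obstacle.
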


\begin{proof}
The equality $\int_C f \dv = k^{p-1}|\partial C|_g$ together with the flux identity
\[
\int_C f \dv = \int_{\partial C} |\nabla_g u_C|_g^{p-1} \ds
\]
gives
\[
\int_{\partial C} |\nabla_g u_C|_g^{p-1} \ds = k^{p-1}|\partial C|_g.
\]
By H\"older's inequality,
\[
\int_{\partial C} |\nabla_g u_C|_g^{p-1} \ds \leq |\partial C|_g^{1/p}\left(\int_{\partial C} |\nabla_g u_C|_g^p \ds\right)^{(p-1)/p},
\]
with equality if and only if $|\nabla_g u_C|_g$ is constant on $\partial C$, equal to $k$. Thus
\[
\int_{\partial C} |\nabla_g u_C|_g^p \ds = k^p|\partial C|_g.
\]
Substituting into \eqref{eq:shapederivp} with $\Omega = C$ gives
\[
dJ_{p,g}(C)[\nu] = \frac{p-1}{p}\int_{\partial C}\left(k^p - |\nabla_g u_C|_g^p\right) \ds = 0.
\]
Hence $C$ is a critical point.
\end{proof}

\begin{remark}\label{rem:eqopenp}
Proposition \ref{prop:equalityp} shows that the equality case is critical: the first-order optimality condition is satisfied at $C$. Whether $J_{p,g}$ admits a minimizer strictly containing $C$ in this case is a delicate question. Under the additional perimeter monotonicity hypothesis of Theorem \ref{thm:neccondp}, no such minimizer can exist: indeed, if $\Omega \supsetneq C$ were a solution, then by the proof of Theorem \ref{thm:neccondp} we would have $\int_C f \dv = k^{p-1}|\partial\Omega|_g > k^{p-1}|\partial C|_g$, contradicting the equality. Without this hypothesis, the question remains open and is a natural subject for future work.
\end{remark}

\section{Final remarks}\label{sec:finalp}

\subsection{Regularity issues}

The proofs above assume hypothesis (H4), namely that the weak solutions $u_\Omega$ are of class $C^{1,\alpha}(\overline{\Omega})$. This is guaranteed by the regularity theory of Tolksdorf \cite{Tolksdorf1983} and V\'azquez \cite{Vazquez1984} when $f \in L^\infty(\M)$ and $\partial\Omega \in C^{2,\alpha}$. For the free boundary regularity of the $p$-Laplacian Bernoulli problem, we refer to De Silva \cite{DeSilva2011}. Alternatively, the results may be interpreted in the weak $W^{1,p}$ sense, in which case the free boundary condition is understood in a variational sense.

\subsection{The role of the boundary-flux stability hypothesis (SC)}

The boundary-flux stability hypothesis (SC) of Definition \ref{def:sc} is not used in the proof of the sufficiency Theorem \ref{thm:suffp}. It is required only for the constructive approach based on the iterative Bernoulli method of \cite{DjiteSeck2026b}, where one builds a sequence of domains whose Bernoulli constants converge to the prescribed level $k$ and passes to the limit in the overdetermined boundary condition. In the spherical example of Section \ref{sec:counterp}, (SC) is automatically satisfied due to the radial symmetry.

\subsection{Extension to general $p$ and open problems}

The counterexample of Section \ref{sec:counterp} works for every $1<p<\infty$, and the sufficiency proof is valid for every $p>1$. The conditional necessity theorem is also valid for every $p>1$. The main open problem is to find a sharp geometric condition under which $(NS)_{p,g}$ is both necessary and sufficient. The counterexample shows that nonnegative curvature alone is not enough. Two natural directions are:
\begin{enumerate}
\item Restrict the admissible class to domains contained in a normal convex neighborhood of $C$ and assume perimeter monotonicity, as in Theorem \ref{thm:neccondp}.
\item Replace $(NS)_{p,g}$ by a different integral condition involving the mean curvature of $\partial C$ and the $p$-capacity of $C$, which might capture the geometry of the problem more accurately in positive curvature.
\end{enumerate}
We leave these questions for future investigation.

\end{document}